%This is a template LaTeX file for the unified format of the Electronic Journal of Qualitative Theory of Differential Equations. Please read and follow very carefully the instructions included in the comments (the lines starting with %). This file should be compiled by pdflatex, but after slight modifications Latex->dvi->ps->pdf and Latex->dvi->pdf compilation is also possible. You only have to rewrite all the .png extensions to .eps in the template and one option of the hyperref package has to be modified where it is indicated. Please make sure that the tink.png file (or the tink.eps file, respectively) is included in the same folder with this template file.
%version.2020.01.25.

\documentclass[11pt, twoside, a4paper]{article}

\usepackage{amsmath}     				%obligatory package
\usepackage{amssymb}   					%obligatory package
\usepackage{amsthm}     				%obligatory package
\usepackage{graphicx}    				%obligatory package

\usepackage{textcomp}    				%obligatory package
\usepackage[T1]{fontenc} 				%obligatory package
\usepackage{marvosym}    				%obligatory package (for the \Letter symbol for indicating corresponding author)
\usepackage[sc]{mathpazo}  	 			%obligatory package (font package)

\usepackage{authblk} 					%obligatory package
\usepackage[usenames]{xcolor}  			%obligatory package
\usepackage{lastpage} 					%obligatory package

\usepackage{enumerate}	 				%recommended package
\usepackage{mathtools}
\usepackage[toc]{appendix}
\mathtoolsset{showonlyrefs}
\usepackage{eufrak,mathrsfs}
\usepackage{soul,xfrac}			%athuzashoz
\usepackage[normalem]{ulem}	%athuzashoz
\usepackage[noadjust]{cite}			%recommended package

\usepackage{hyperref} 					%Use this in pdflatex mode  %obligatory package

\definecolor{ForestGreen}{rgb}{0.15,0.416,0.18}
\definecolor{EgyptBlue}{rgb}{0.063,0.2,0.65}
\hypersetup{
	colorlinks=true,
	linkcolor=EgyptBlue,         
   citecolor=EgyptBlue,          
   urlcolor=ForestGreen          
}

\linespread{1.05}
\hoffset -1in
\voffset -1in
\oddsidemargin 25mm
\textwidth 160mm
\topmargin 10mm
\headheight 10mm
\headsep 10mm
\textheight 237mm

\newtheorem{theorem}{Theorem}[section]
\newtheorem{corollary}[theorem]{Corollary}
\newtheorem{lemma}[theorem]{Lemma}
\newtheorem{proposition}[theorem]{Proposition}

\newtheorem{asum}[theorem]{Assumption}
\newtheorem{assum}[theorem]{Assumptions}

\theoremstyle{definition}
\newtheorem{definition}[theorem]{Definition}
\theoremstyle{definition}
\newtheorem{remark}[theorem]{Remark}
\theoremstyle{definition}

\numberwithin{equation}{section}
\numberwithin{table}{section}
\numberwithin{figure}{section}

\title{On the stochastic Allen-Cahn equation on networks with multiplicative noise}

\author[1, 2]{\textbf{Mih\'aly Kov\'acs}}
\author[3, 4]{\textbf{Eszter Sikolya}}

\affil[1]{Faculty of Information Technology and Bionics, P\'azm\'any P\'eter Catholic University, Pr\'ater u. 50/A., Budapest, H--1083, Hungary}
\affil[2]{Chalmers University of Technology and University of Gothenburg, SE-412 96 Gothenburg, Sweden}
\affil[3]{Institute of Mathematics, E\"otv\"os Lor\'and University, P\'azm\'any P\'eter stny. 1/c.
Budapest, H--1117, Hungary}
\affil[4]{Alfr\'ed R\'enyi Institute of Mathematics, Re\'altanoda street 13–15, Budapest, H-1053, Hungary}

% % % % % % % % % % % % % % % % % % % % % % % % % % % % % % % % % % % % %
% % % % % % % % % Please do not change this paragraph.% % % % % % % % % % 
% % % % % % % % % % % % % % % % % % % % % % % % % % % % % % % % % % % % %
\newcommand{\doi}[1]{\url{https://doi.org/#1}}
\newcommand{\MR}[1]{\href{https://www.ams.org/mathscinet-getitem?mr=#1}{MR#1}}

\makeatletter
\renewcommand{\maketitle}{\bgroup\setlength{\parindent}{0pt}

%\begin{picture}(20,20)
     %\put(-50,-5){\includegraphics[width=2.9truecm]{\ejqtdelogo}}
  %\end{picture}

\vspace{1truecm}
\begin{center}{\vbox{\titlefont\@title}}\end{center}
\vspace{0.5truecm}
\begin{center}{\@author} \end{center}

\egroup
}

\renewcommand{\@fnsymbol}[1]{%
    \ifcase#1 \or {\,\Letter\!} \or\textasteriskcentered\or \textasteriskcentered\textasteriskcentered 
    \else\@ctrerr\fi}
\makeatother

\newcommand*{\titlefont}{\fontsize{18}{21.6}\selectfont\textbf}

\makeatletter
\renewcommand\@author{\ifx\AB@affillist\AB@empty\AB@author\else
      \ifnum\value{affil}>\value{Maxaffil}\def\rlap##1{##1}%
    \AB@authlist\\[\affilsep]\vbox{\AB@affillist}
    \else  \AB@authors\fi\fi}
\makeatother

%\makeatletter
%\def\Year{20XX}
%\def\IssueNumber{XX}
%\def\ps@plain{
%\def\@oddhead{\ifnum\thepage=1\hss\baselineskip8pt
%\vtop to 0 pt{\vskip-0.4truecm\noindent\hbox{\hspace{1.3truecm}\Large
%Electronic Journal of Qualitative Theory of Differential Equations\hss\linebreak}%
%\vskip 0.1truecm
%\noindent\hbox{\hspace{1.3truecm}\footnotesize \Year, No.\ {\bf \IssueNumber}, {1--\begin{NoHyper}\pageref{LastPage}\end{NoHyper}};\enspace \href{https://doi.org/10.14232/ejqtde.\Year.1.\IssueNumber}{https:/\!/doi.org/10.14232/ejqtde.\Year.1.\IssueNumber}} \hfill \hbox{\footnotesize \href{https://www.math.u-szeged.hu/ejqtde/}{www.math.u-szeged.hu/ejqtde/}}
%\vss}
%\else
%\hss\textit{\shorttitle}\hss\hbox to 0pt{\hss\thepage}\fi}\def\@oddfoot{}
%\def\@evenhead{\hbox to 0pt{\thepage\hss} \hss\textit{\authorsshort}\hss}
%\def\@evenfoot{}
%}
%\makeatother

% % % % % % % % % % % % % % % % % % % % % % % % % % % % % % % % % % % % %
% % % % % % % % % % % % % % % % % % % % % % % % % % % % % % % % % % % % % 
% % % % % % % % % % % % % % % % % % % % % % % % % % % % % % % % % % % % %

\def\e{\mathrm{e}}
\def\ve{\varepsilon}
\def\la{\lambda}
\def\me{\mathsf{e}}
\def\mv{\mathsf{v}}

\def\ea{\EuFrak{a}}

\def\ve{\varepsilon}

\def\real{\mathbb{R}}

\def\mcB{\mathcal{B}}

\def\mcW{\mathcal{W}}

\def\dt{\, dt}
\def\ds{\, ds}

\begin{document}

\maketitle

\pagestyle{plain}

\begin{center}
\noindent
\begin{minipage}{0.85\textwidth}\parindent=15.5pt
%
%\medskip
%\begin{center}
%\noindent Received 12 August 2020, appeared 25 January 2021 % Please do not change this line, it will be edited by the technical editors.
%\smallskip
%
%\noindent Communicated by Handling Editor
%\end{center}
%\bigskip

{\small{
\noindent {\bf Abstract.} We consider a system of stochastic Allen-Cahn equations on a finite network represented by a finite graph. On each edge in the graph a multiplicative Gaussian noise driven stochastic Allen-Cahn equation is given with possibly different potential barrier heights supplemented by a continuity condition and a Kirchhoff-type law in the vertices. Using the semigroup approach for stochastic evolution equations in Banach spaces we obtain existence and uniqueness of solutions with sample paths in the space of continuous functions on the graph. We also prove more precise space-time regularity of the solution.}
\smallskip

\noindent {\bf{Keywords:}} stochastic evolution equations, stochastic reaction-diffusion equations on networks, analytic semigroups, stochastic Allen-Cahn equation.
\smallskip

\noindent{\bf{2020 Mathematics Subject Classification:}} 60H15, 35R60 (Primary), 35R02, 47D06 (Secondary).
}

\end{minipage}
\end{center}

\section{Introduction}

We consider a finite connected network, represented by a
finite graph $G$ with $m$ edges $\me _1,\dots,\me _m$ and $n$
vertices $\mv_1,\dots,\mv_n$. 
We normalize and parametrize the edges on the interval $[0,1]$. We denote by $\Gamma(\mv_i)$ the set of all the indices of the edges
having an endpoint at $\mv _i$, i.e.,
\[\Gamma(\mv _i)\coloneqq\left\{j\in \{1,\ldots,m\}: \me _j(0)=\mv _i\hbox{ or } \me _j(1)=\mv _i\right\}.\]
Denoting by $\Phi\coloneqq(\phi_{ij})_{n\times m}$
 the so-called incidence matrix of the graph $G$, see Subsection \ref{subsec:systeq} for more details, we aim to analyse the existence, uniqueness and  regularity of solutions of the problem
\begin{equation}\label{eq:stochnet}
\left\{\begin{aligned}
\dot{u}_j(t,x) & =  (c_j u_j')'(t,x)-p_j(x)u_j(x,t)&&\\
 &\quad  +\beta_j^2u(x,t)- u(x,t)^3&&\\
 &\quad + g_j(t,x,u_j(t,x))\frac{\partial w_j}{\partial t}(t,x), &&t\in(0,T],\; x\in(0,1),\; j=1,\dots,m, \\
 u_j(t,\mv _i)& =  u_\ell (t,\mv _i)\eqqcolon q_i(t), &&t\in(0,T],\; \forall j,\ell\in \Gamma(\mv _i),\; i=1,\ldots,n,\\
\left[M q(t)\right]_{i} & =  -\sum_{j=1}^m \phi_{ij}\mu_{j} c_j(\mv_i) u'_j(t,\mv_i), && t\in(0,T],\; i=1,\ldots,n,\\
 u_j(0,x) & =  \mathsf{u}_{j}(x), &&x\in [0,1],\; j=1,\dots,m, 
\end{aligned}
\right.
\end{equation}
where $\frac{\partial w_j}{\partial t}$ are independent space-time white noises. The reaction terms in \eqref{eq:stochnet} are classical  Allen-Cahn nonlinearities $h_j(\eta)=-\eta^3+\beta_j^2 \eta$ with $\beta_j>0$, $j=1,\dots, m$. Note that $h_j=-H_j'$ where $H_j(\eta)=\frac14(\eta^2-\beta_j^2)^2$ is a double well potential for each $j$ with potential barrier height ${\beta_j^4}/{4}$. The diffusion coefficients $g_j$ are assumed to be locally Lipschitz continuous and of linear growth. 
The coefficients of the linear operator satisfy standard smoothness assumptions, see Subsection \ref{subsec:systeq}, the matrix $M$ satisfies Assumptions \ref{as:M} and $\mu_j$, $j=1,\dots, m$, are positive constants. The classical Allen-Cahn equation belongs to the class of phase field models and is a classical tool to model processes involving thin interface layers between almost homogeneous regions, see \cite{AC79}. It is a particular case of a reaction-diffusion equation of
bistable type and it can be used to study front propagations as in \cite{BBS92} . Effects due to, for example,
thermal fluctuations of the system can be accounted for by adding a Wiener type noise in the equation, see \cite{Co70}. 

While deterministic evolution equations on networks are well studied, see,  \cite{Al84,Al94,BFN16,BFN16b,Be85,Be88,Be88b,BN96,Ca97,CF03,EK19,Ka66,KMS07,KS05,Lu80,MS07,Mu07,MR07,Mu14} which is, admittedly, a rather incomplete list, the study of their stochastic counterparts is surprisingly scarce despite their strong link to applications, see e.g. \cite{BMZ08,BM10,Tu06} and the references therein. In \cite{BMZ08} additive L\'evy noise is considered that is square integrable with drift being a cubic polynomial. In  \cite{BZ14} multiplicative square integrable L\'evy noise is considered but with globally Lipschitz drifts $f_j$ and diffusion coefficients and with a small time dependent perturbation of the linear operator. Paper \cite{BM10} treats the case when the noise is an additive fractional Brownian motion and the drift is zero. In \cite{CP17a} multiplicative Wiener perturbation is considered both on the edges and vertices with globally Lipschitz diffusion coefficient and zero drift and time-delayed boundary condition.  Finally, in \cite{CP17}, the case of multiplicative Wiener noise is treated with bounded and globally Lipschitz continuous drift and diffusion coefficients and noise both on the edges and vertices. 

In all these papers the semigroup approach is utilized in a Hilbert space setting and the only work that treats non-globally Lipschitz continuous drifts on the edges, similar to the ones considered here,  is \cite{BMZ08} but the noise is there additive and square-integrable. In this case, energy arguments are possible using the additive nature of the equation which does not carry over to the multiplicative case. Therefore, we use an entirely different tool set based on the semigroup approach for stochastic evolution equations in Banach spaces \cite{vNVW08}, or for the classical stochastic reaction-diffusion setting \cite{KvN12,KvN19}, see also, \cite{BG99,BP99,Ce03,Pe95}. We are able to rewrite \eqref{eq:stochnet} in a form that fits into this framework. After establishing various embedding and isomorphy results of function spaces and  interpolation spaces,  we may use \cite[Theorem 4.9]{KvN19} to prove our main existence and uniqueness result, Theorem \ref{thm:SAC}, which guarantees  existence and uniqueness of solutions with sample paths in the space of continuous functions on the graph, denoted by $B$ in the paper (see Definition \ref{def:b}); that is,  in the space of continuous functions that are continuous on the edges and also across the vertices. When the initial data is sufficiently regular, then Theorem \ref{thm:SAC} also yields certain space-time regularity of the solution.

The paper is organized as follows. In Section \ref{sec:determnetwork} we collect partially known semigroup results for the linear deterministic version of \eqref{eq:stochnet}.  In Subsection \ref{subsec:KVN} we first recall an abstract result from \cite{KvN12,KvN19} regarding abstract stochastic Cauchy problems in Banach spaces. In order to utilize the abstract framework in our setting we prove various preparatory results  in Subsection \ref{subsec:prep}: embedding and isometry results are contained in Lemma \ref{lem:ApmaxW0G}, Lemma \ref{lem:Biso} and Corollary \ref{cor:fractionalspaceincl}, and a semigroup result in Proposition \ref{prop:mcAonC}. Subsection \ref{subsec:main} contains our main results where we first consider the abstract stochastic It\^o equation corresponding to a slightly more general version of \eqref{eq:stochnet}. An existence and uniqueness result for the abstract stochastic It\^o problem is contained in Theorem \ref{theo:SCPnsolcont} followed by a space-time regularity result  in Theorem \ref{theo:Holderreg}. These are then applied to the It\^o equation corresponding \eqref{eq:stochnet} to yield the main result of the paper, Theorem \ref{thm:SAC}, concerning the existence, uniqueness and space-time regularity of the solution of  \eqref{eq:stochnet}.

\section{Heat equation on a network}\label{sec:determnetwork}

\subsection{The system of equations}\label{subsec:systeq}

We consider a finite connected network, represented by a
finite graph $G$ with $m$ edges $\me _1,\dots,\me _m$ and $n$
vertices $\mv_1,\dots,\mv_n$. 
We normalize and parametrize the edges on the interval $[0,1]$. 

The structure of the network is given by the $n\times m$ matrices
$\Phi^+\coloneqq(\phi^+_{ij})$ and $\Phi^-\coloneqq(\phi^-_{ij})$ defined by
\begin{equation}\label{eq:fiijpm}
\phi^+_{ij}\coloneqq\left\{
\begin{array}{rl}
1, & \hbox{if } \me _j(0)=\mv _i,\\
0, & \hbox{otherwise},
\end{array}
\right.
\qquad\hbox{and}\qquad
\phi^-_{ij}\coloneqq\left\{
\begin{array}{rl}
1, & \hbox{if } \me _j(1)=\mv _i,\\
0, & \hbox{otherwise,}
\end{array}
\right.
\end{equation}
for $i=1,\ldots ,n$ and $j=1,\ldots m.$ We denote by $\me _j(0)$ and $\me _j(1)$ the $0$ and the $1$ endpoint of the edge $\me _j$, respectively.
We refer to~\cite{KS05} for terminology. The $n\times m$ matrix
$\Phi\coloneqq(\phi_{ij})$ defined by \[\Phi\coloneqq\Phi^+-\Phi^-\] is known in
graph theory as \emph{incidence matrix} of the graph $G$. Further,
let $\Gamma(\mv_i)$ be the set of all the indices of the edges
having an endpoint at $\mv _i$, i.e.,
\[\Gamma(\mv _i)\coloneqq\left\{j\in \{1,\ldots,m\}: \me _j(0)=\mv _i\hbox{ or } \me _j(1)=\mv _i\right\}.\]
For the sake of simplicity, we will denote the values of a continuous function defined on the (parameterized) edges of the graph, that is of
\[f=\left(f_1,\ldots ,f_m\right)^{\top}\in \left(C[0,1]\right)^m\cong C\left([0,1],\real^m\right)\]
at $0$ or $1$ by $f_j(\mv_i)$ if $\me _j(0)=\mv _i$ or $\me _j(1)=\mv _i$, respectively, and $f_j(\mv_i)\coloneqq0$ otherwise, for $j=1,\ldots ,m$.

We start with the problem
\begin{equation}\label{netcp}
\left\{\begin{array}{rclll}
\dot{u}_j(t,x)&=& (c_j u_j')'(t,x)-p_j(x)u_j(t,x), &t> 0,\; x\in(0,1),\; j=1,\dots,m, & (a)\\[0.1cm]
u_j(t,\mv _i)&=&u_\ell (t,\mv _i)\eqqcolon q_i(t), &t> 0,\; \forall j,\ell\in \Gamma(\mv _i),\; i=1,\ldots,n,& (b)\\[0.1cm]
\left[M q(t)\right]_{i} &=& -\sum_{j=1}^m \phi_{ij}\mu_{j} c_j(\mv_i) u'_j(t,\mv_i), &t> 0,\; i=1,\ldots,n,& (c)\\[0.1cm]
u_j(0,x)&=&\mathsf{u}_{j}(x), &x\in [0,1],\; j=1,\dots,m & (d)
\end{array}
\right.
\end{equation}
on the network. Note that $c_j(\cdot)$, $p_j(\cdot)$ and $u_j(t,\cdot)$ are
functions on the edge $\me_j$ of the network, so that the
right-hand side of~$(\ref{netcp}a)$ reads in fact as
\[(c_j u_j')'(t,\cdot)=\frac{\partial}{\partial x}\left( c_j\frac{\partial}{\partial x}u_j\right)(t,\cdot)-p_j(\cdot)u_j(t,\cdot), \qquad t\geq 0,\; j=1,\ldots,m.\]

The functions $c_1,\ldots,c_m$ are (variable) diffusion coefficients or conductances, and we assume that 
\[0<c_j\in C^1[0,1],\quad j=1,\ldots,m.\]

The functions $p_1,\ldots,p_m$ are nonnegative, continuous functions, hence
\begin{equation}\label{eq:pjpos}
0\leq p_j\in C[0,1],\quad j=1,\ldots,m.
\end{equation}

Equation~$(\ref{netcp}b)$ represents the continuity of the values attained by the system at the vertices in each time instant, and we denote by $q_i(t)$ the common functions values in the vertice $i$, for $i=1,\ldots,n$ and $t>0$.

In $(\ref{netcp}c)$, $M\coloneqq\left(b_{ij}\right)_{n\times n}$ is a matrix satisfying the following
\begin{asum}\label{asum:M1}
The matrix $M=\left(b_{ij}\right)_{n\times n}$ is real, symmetric and negative semidefinite, $M\not\equiv 0$.
\end{asum}

On the left-hand-side, $[Mq(t)]_{i}$ denotes the $i$th coordinate of the vector $M q(t)$. On the right-hand-side, the coefficients 
\[0<\mu_j,\quad  j=1,\ldots,m\] 
are strictly positive constants that influence the distribution of impulse happening in the ramification nodes according to the Kirchhoff-type law~$(\ref{netcp}c)$.

We now introduce the $n\times m$ \emph{weighted incidence matrices}
\[\Phi^+_w\coloneqq(\omega^+_{ij})\text{ and }\Phi^-_w\coloneqq(\omega^-_{ij})\] 
with entries
\begin{equation}\label{eq:matrixFiw}
\omega^+_{ij}\coloneqq\left\{
\begin{array}{ll}
\mu_j c_j(\mv_i), & \hbox{if } \me _j(0)=\mv _i,\\
0, & \hbox{otherwise},
\end{array}
\right. \qquad\hbox{and}\qquad \omega^-_{ij}\coloneqq\left\{
\begin{array}{ll}
\mu_j c_j(\mv_i), & \hbox{if } \me _j(1)=\mv _i,\\
0, & \hbox{otherwise}.
\end{array}
\right.
\end{equation}

With these notations, the Kirchhoff law $(\ref{netcp}c)$ becomes
\begin{equation}
\label{eq:Kir}
Mq(t)=-\Phi_w^+ u'(t,0)+\Phi_w^- u'(t,1), \qquad t\geq 0.
\end{equation}

In equation~$(\ref{netcp}d)$ we pose the initial conditions on the edges.

\subsection{Spaces and operators}

We are now in the position to rewrite our system in form of an abstract Cauchy problem, following the concept of \cite{KMS07}. First we consider the (real) Hilbert space
\begin{equation}\label{eq:E2}
E_2\coloneqq\prod_{j=1}^m L^2(0,1; \mu_j dx)
\end{equation}
as the \emph{state space} of the edges, endowed with the natural inner product
\[\langle u,v\rangle_{E_2}\coloneqq \sum_{j=1}^m \int_0^1 u_j(x)v_j(x) \mu_j dx,\qquad
u=\left(\begin{smallmatrix}u_1\\ \vdots\\
u_m\end{smallmatrix}\right),\;v=\left(\begin{smallmatrix}v_1\\ \vdots\\
v_m\end{smallmatrix}\right)\in E_2.\]

Observe that $E_2$ is isomorphic to $\left( L^2(0,1)\right)^m$ with equivalence of norms. 

We further need the \emph{boundary space} $\real^{n}$ of the vertices. According to $(\ref{netcp}b)$ we will consider such functions on the edges of the graph those values coincide in each vertex. Therefore we introduce the \emph{boundary value operator} 
\[L\colon\left(C[0,1]\right)^m\subset E_2\to \real^n\]
with
\begin{align}\label{eq:Ldef}
D(L) & = \left\{u\in \left(C[0,1]\right)^m: u_j(\mv _i)=u_\ell (\mv _i),\; \forall j,\ell\in \Gamma(\mv _i),\; i=1,\ldots,n\right\};\notag\\
L u & \coloneqq\left(q_1,\ldots ,q_n\right)^{\top}\in \real^n,\quad q_i=u_j(\mv _i)\text{ for some } j\in \Gamma(\mv _i),\; i=1,\ldots,n.
\end{align}
The condition $u(t,\cdot)\in D(L)$ for each $t>0$ means that $(\ref{netcp}b)$ is for the function $u(\cdot,\cdot)$ satisfied.

On $E_2$ we define the operator
\begin{equation}\label{eq:opAmax}
A_{max}\coloneqq\begin{pmatrix}
\frac{d}{dx}\left(c_1 \frac{d}{dx}\right)-p_1 & & 0\\
 & \ddots &\\
0 & & \frac{d}{dx}\left(c_m \frac{d}{dx}\right)-p_m \\
\end{pmatrix}
\end{equation}
with domain
\begin{equation}\label{eq:domAmax}
D(A_{max})\coloneqq\left(H^2(0,1)\right)^m\cap D(L).
\end{equation}

This operator can be regarded as \emph{maximal} since no other boundary condition except continuity is supposed for the functions in its domain.

We further define the so called \emph{feedback operator} acting on $D(A_{max})$ and having values in the boundary space $\real^{n}$ as
\begin{align}\label{eq:opC}
D(C) & = D(A_{max});\\
C u & \coloneqq-\Phi_w^+ u'(0)+\Phi_w^- u'(1),
\end{align}
compare with \eqref{eq:Kir}.

With these notations, we can finally rewrite \eqref{netcp} in form of an abstract Cauchy problem. Define
\begin{align}\label{eq:amain}
A&\coloneqq A_{max}\\
D(A)&\coloneqq\{u\in E_2:u\in D(A_{max})\text{ and } MLu=Cu\},
\end{align}
see the definitions above.
Using this, \eqref{netcp} becomes
\begin{equation}\label{eq:acp}
\left\{\begin{array}{rcll}
\dot{u}(t)&=& A u(t), &t> 0,\\
u(0)&=&\mathsf{u},
\end{array}
\right.
\end{equation}
with $\mathsf{u}=(\mathsf{u}_1,\dots ,\mathsf{u}_m)^{\top}$.

\subsection{Well-posedness of the abstract Cauchy problem}

To prove well-posedness of \eqref{eq:acp} we define a bilinear form on the Hilbert space $E_2$ with domain
\begin{equation}\label{eq:domform}
D\left(\ea\right)=V\coloneqq \left(H^1(0,1)\right)^m\cap D(L).
\end{equation}
as
\begin{equation}\label{eq:form}
\ea(u,v)\coloneqq\sum_{j=1}^m\int_0^1 \mu_j c_j(x) u'_j(x) v'_j(x) dx+\sum_{j=1}^m\int_0^1 \mu_j p_j(x)u_j(x)v_j(x) dx-\langle M q,r\rangle_{\real^n},
\end{equation}
where $Lu=q$ and $Lv=r$.
 
The next definition can be found e.g. in \cite[Section 1.2.3]{Ou05}.
\begin{definition}
From the form $\ea$ -- using the Riesz representation theorem -- we can obtain a unique operator
$\left(B,D(B)\right)$ in the following way:
\begin{align*}
D(B)&\coloneqq \left\{u\in V:\exists v\in E_2 \hbox{ s.t. } \ea(u,\varphi)=\langle v,\varphi\rangle_{E_2}\; \forall \varphi\in V\right\},\\
Bu&\coloneqq-v.
\end{align*}
We say that the operator $\left(B,D(B)\right)$ is \emph{associated with the form $\ea$}.
\end{definition}

In the following, we will claim that the operator associated with the form $\ea$ is $(A,D(A)).$ Furthermore, we will state results regarding how the properties of $\ea$ and the matrix $M$ carry on the properties of the operator $A$, obtaining the well-posedness of the abstract Cauchy-problem \eqref{eq:acp} on $E_2$ and even on $L^p$-spaces of the edges. The proofs of these statements combine techniques of \cite{Mu07} (where no $p_j$'s on the right-hand-side of $(\ref{netcp}b)$ are considered) and techniques of \cite{MR07} (where $p_j$'s are considered for the heat equation but the matrix $M$ is diagonal).

\begin{proposition}
The operator associated to the form $\ea$ \eqref{eq:domform}--\eqref{eq:form} is $(A,D(A))$ in \eqref{eq:amain}.
\end{proposition}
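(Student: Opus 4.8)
The plan is to show the two operators coincide by proving inclusion in both directions, which is the standard strategy for identifying a form-associated operator with a concretely given one.

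First I would take $u\in D(A)$, so $u\in \left(H^2(0,1)\right)^m\cap D(L)$ and $MLu=Cu$, i.e. $MLu=-\Phi_w^+u'(0)+\Phi_w^-u'(1)$. For an arbitrary $\varphi\in V=\left(H^1(0,1)\right)^m\cap D(L)$ I would compute $\langle -A_{max}u,\varphi\rangle_{E_2}$ and integrate by parts on each edge:
\[
\sum_{j=1}^m\int_0^1\mu_j\left(-(c_ju_j')'+p_ju_j\right)\varphi_j\dx
=\sum_{j=1}^m\int_0^1\mu_j c_ju_j'\varphi_j'\dx+\sum_{j=1}^m\int_0^1\mu_j p_ju_j\varphi_j\dx-\sum_{j=1}^m\mu_j\bigl[c_ju_j'\varphi_j\bigr]_0^1.
\]
The boundary terms must be reorganized by collecting, for each vertex $\mv_i$, the contributions of all edges $j\in\Gamma(\mv_i)$; using that $\varphi\in D(L)$ so $\varphi_j(\mv_i)=r_i$ is common, and the sign bookkeeping encoded in $\Phi^\pm_w$, the sum of boundary terms becomes $\langle \Phi_w^+u'(0)-\Phi_w^-u'(1),r\rangle_{\real^n}=\langle -Cu,r\rangle_{\real^n}=\langle -MLu,r\rangle_{\real^n}=-\langle Mq,r\rangle_{\real^n}$ with $q=Lu$. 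Hence $\langle -A_{max}u,\varphi\rangle_{E_2}=\ea(u,\varphi)$ for all $\varphi\in V$, which shows $u\in D(B)$ and $Bu=A_{max}u=Au$.

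Conversely I would take $u\in D(B)$, so $u\in V$ and there is $v\in E_2$ with $\ea(u,\varphi)=\langle v,\varphi\rangle_{E_2}$ for all $\varphi\in V$. Testing first with $\varphi$ supported in the interior of a single edge (so $\varphi\in \left(C_c^\infty(0,1)\right)$ in that coordinate, zero elsewhere, which lies in $V$) gives, in the distributional sense, $-(c_ju_j')'+p_ju_j=v_j$ on $(0,1)$ for each $j$; since $c_j\in C^1[0,1]$ is bounded below away from $0$ and $p_j,v_j$ are as assumed, elliptic regularity on the interval yields $u_j\in H^2(0,1)$, so $u\in \left(H^2(0,1)\right)^m\cap D(L)=D(A_{max})$. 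Then I would redo the integration by parts (now legitimate since $u\in\left(H^2\right)^m$) for general $\varphi\in V$, subtract the identity $\langle -A_{max}u,\varphi\rangle_{E_2}=\langle v,\varphi\rangle_{E_2}$, and be left with $\langle -Mq,r\rangle_{\real^n}=\langle (-\Phi_w^+u'(0)+\Phi_w^-u'(1)),r\rangle_{\real^n}$ for all $r\in\real^n$ (the map $\varphi\mapsto L\varphi$ from $V$ onto $\real^n$ is surjective, since one can prescribe vertex values and extend smoothly along edges). This forces $Mq=-\Phi_w^+u'(0)+\Phi_w^-u'(1)$, i.e. $MLu=Cu$, so $u\in D(A)$ and $Bu=-v=A_{max}u=Au$.

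The main obstacle is the boundary-term bookkeeping: one has to match the per-edge endpoint contributions $\mu_j c_j(\mv_i)u_j'(\mv_i)$ with the correct signs (an edge with $\me_j(0)=\mv_i$ contributes with one sign, with $\me_j(1)=\mv_i$ the opposite) and recognize the result as exactly $\langle Cu,L\varphi\rangle_{\real^n}$ via the definitions of $\Phi_w^\pm$ in \eqref{eq:matrixFiw} and of $C$ in \eqref{eq:opC}. The only other point needing a little care is the surjectivity of $L\colon V\to\real^n$, used to pass from "$\langle Mq-(\text{flux}),r\rangle=0$ for all attainable $r$" to the pointwise Kirchhoff condition; this follows since $G$ is finite and each edge is a copy of $[0,1]$, so suitable affine (or smooth cutoff) functions on the edges realize any prescribed vertex data.
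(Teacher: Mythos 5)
Your proof is correct and is essentially the argument the paper relies on: the paper itself only cites \cite[Lemma 3.4]{Mu07} and \cite[Lemma 3.3]{MR07}, where precisely this two-inclusion integration-by-parts argument (interior testing plus elliptic regularity for one direction, boundary-term bookkeeping at the vertices and surjectivity of $L$ on $V$ for the other) is carried out. Your version fills in the details the paper leaves to the references, and the sign accounting via $\Phi_w^{\pm}$ and the identification of the boundary sum with $\langle Cu, L\varphi\rangle_{\real^n}$ are handled correctly.
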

\begin{proof}
We can proceed similarly as in the proofs of \cite[Lemma 3.4]{Mu07} and \cite[Lemma 3.3]{MR07}.
\end{proof}

\begin{proposition}\label{prop:eaprop}
The form $\ea$ is densely defined, continuous, closed and accretive, hence $(A,D(A))$ is densely defined, dissipative and sectorial. Furthermore, $\ea$ is symmetric, hence the operator $(A,D(A))$ is self-adjoint. 
\end{proposition}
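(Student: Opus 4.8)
The plan is to verify, in order, the six properties attributed to the form $\ea$ and then transfer them to the operator $(A,D(A))$ using the standard correspondence between closed, densely defined, sectorial sesquilinear forms and their associated operators (as in \cite[Chapter 1]{Ou05}). First I would check that $\ea$ is \emph{densely defined}: since $D(\ea)=V=(H^1(0,1))^m\cap D(L)$ contains, for instance, all tuples of smooth functions that vanish near $0$ and $1$, and such functions are already dense in $E_2\cong (L^2(0,1))^m$, density is immediate. For \emph{symmetry}, note that the first two integral terms in \eqref{eq:form} are manifestly symmetric in $u,v$, and the term $-\langle Mq,r\rangle_{\real^n}$ is symmetric precisely because $M$ is a symmetric matrix by Assumption \ref{asum:M1}; hence $\ea(u,v)=\ea(v,u)$.

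Next I would establish \emph{continuity} (boundedness) of $\ea$ on $V$ equipped with its natural norm $\|u\|_V^2=\|u\|_{E_2}^2+\sum_j\|u_j'\|_{L^2}^2$. The first two terms are bounded using $c_j\in C^1[0,1]$, $p_j\in C[0,1]$ (so both are bounded on $[0,1]$), Cauchy-Schwarz, and the positivity of the weights $\mu_j$. For the boundary term, one uses that the trace map $V\ni u\mapsto Lu=q\in\real^n$ is bounded: each coordinate of $q$ is a pointwise value $u_j(\mv_i)$, and by the one-dimensional Sobolev embedding $H^1(0,1)\hookrightarrow C[0,1]$ together with the continuity condition in $D(L)$, we get $|q_i|\le \|u_j\|_{C[0,1]}\lesssim \|u_j\|_{H^1(0,1)}\lesssim \|u\|_V$; combined with the operator norm of $M$ this bounds $|\langle Mq,r\rangle|\lesssim \|u\|_V\|v\|_V$. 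For \emph{accretivity}, I would show $\ea(u,u)\ge 0$ for all $u\in V$: the two integral terms are nonnegative because $c_j>0$, $p_j\ge 0$, $\mu_j>0$, while $-\langle Mq,q\rangle_{\real^n}\ge 0$ because $M$ is negative semidefinite. (If one wants strict sectoriality/coercivity of $\ea+\omega$ for some $\omega\ge 0$, it follows automatically once continuity and accretivity on the Hilbert space $E_2$ are in hand, but only accretivity is claimed here.)

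The step I expect to be the main obstacle is \emph{closedness} of $\ea$, i.e. that $V$ is complete under the form norm $\|u\|_\ea^2=\ea(u,u)+\|u\|_{E_2}^2$ and that this norm is equivalent to the graph norm of $V$. The inequality $\|u\|_\ea^2\lesssim \|u\|_V^2$ is just continuity, already shown. For the reverse, $\|u\|_V^2\lesssim \|u\|_\ea^2$, the issue is that $\ea(u,u)$ controls $\sum_j \mu_j\int_0^1 c_j|u_j'|^2$ (hence $\sum_j\|u_j'\|_{L^2}^2$, using $c_j\ge \delta>0$ by compactness and continuity, and $\mu_j>0$) plus a nonnegative $p$-term plus a nonnegative boundary term; adding $\|u\|_{E_2}^2$ then recovers the full $V$-norm up to constants. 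Thus the form norm is equivalent to the $V$-norm, and since $V=(H^1(0,1))^m\cap D(L)$ is closed in $(H^1(0,1))^m$ — the continuity constraints defining $D(L)$ are closed conditions under $H^1$-convergence, again via $H^1\hookrightarrow C[0,1]$ — completeness of $V$ follows, giving closedness of $\ea$.

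Finally I would invoke the form-to-operator dictionary: a densely defined, closed, continuous, accretive sesquilinear form on a Hilbert space has an associated operator that is densely defined, dissipative (indeed $m$-dissipative, hence sectorial as it generates an analytic $C_0$-semigroup) — this is exactly \cite[Section 1.2.3 and Theorem 1.52]{Ou05} or equivalent references. Combined with the previous proposition identifying the operator associated with $\ea$ as $(A,D(A))$, this yields that $(A,D(A))$ is densely defined, dissipative and sectorial. Symmetry of $\ea$ translates into self-adjointness of $(A,D(A))$, because the operator associated with a symmetric, densely defined, closed, lower-bounded form is self-adjoint (the classical first representation theorem). This completes the argument.
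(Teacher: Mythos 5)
Your proposal is correct and follows essentially the same route as the paper: the paper simply cites \cite[Lemma 3.2]{MR07}, \cite[Proposition 3.2, Corollary 3.3]{Mu07} for the form properties and \cite[Propositions 1.24, 1.51, Theorem 1.52]{Ou05} for the form-to-operator transfer, and your argument is a correct fleshing-out of exactly those verifications (density via compactly supported smooth functions, continuity via $H^1(0,1)\hookrightarrow C[0,1]$ and boundedness of $c_j$, $p_j$, $M$, accretivity from $c_j>0$, $p_j\ge 0$ and negative semidefiniteness of $M$, closedness via equivalence of the form norm with the $V$-norm, and the representation theorem for symmetric forms). No gaps.
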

\begin{proof}
The first three properties of $\ea$ (densely defined, continuous and closed) follow analogous to the proof of \cite[Lemma 3.2]{MR07}. Since $M$ is dissipative (that is, negative semidefinite), and $p_j\geq 0$, $j=1,\dots ,m$, the form $\ea$ is accretive, see the proofs of \cite[Proposition 3.2]{Mu07} and \cite[Lemma 3.2]{MR07}. The symmetricity of $\ea$ follows from the fact that $M$ is real and symmetric, see the proof of \cite[Corollary 3.3]{Mu07}. The properties of $A$ follow now by \cite[Proposition 1.24, 1.51, Theorem 1.52]{Ou05}.
\end{proof}

As a corollary we obtain well-posedness of \eqref{eq:acp}.

\begin{proposition}\label{prop:determL2}
Assuming Assumption \ref{asum:M1} on the matrix $M$, the operator $\left(A, D(A)\right)$ defined in \eqref{eq:amain} generates a $C_0$ analytic, compact semigroup of contractions $\left(T_2(t)\right)_{t\geq 0}$ on $E_2.$ Hence, the abstract Cauchy problem \eqref{eq:acp} is well-posed on $E_2$.
\end{proposition}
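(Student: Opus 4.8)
The plan is to read off almost everything from Proposition \ref{prop:eaprop} through the standard form-to-semigroup dictionary, and then to supply the one genuinely new ingredient, compactness, by a direct argument. Concretely: (i) use the form generation theorem to get an analytic $C_0$-semigroup; (ii) use dissipativity plus the Lumer--Phillips theorem to get contractivity; (iii) show that $A$ has compact resolvent, which combined with analyticity yields a compact semigroup; (iv) translate generation of a $C_0$-semigroup into well-posedness of \eqref{eq:acp}.

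For (i)--(ii): by Proposition \ref{prop:eaprop} the form $\ea$ is densely defined, continuous, closed and accretive, so the standard form generation theorem (e.g. \cite[Theorem 1.52]{Ou05}) applies to the operator associated with $\ea$, which by the preceding proposition is exactly $(A,D(A))$ from \eqref{eq:amain}. Hence $A$ generates a bounded analytic $C_0$-semigroup $\left(T_2(t)\right)_{t\geq 0}$ on $E_2$, the sector of analyticity being governed by the continuity constant of $\ea$. Accretivity of $\ea$ is equivalent to dissipativity of $A$ (again Proposition \ref{prop:eaprop}), and since $-A$ is already a generator it is $m$-dissipative, so by the Lumer--Phillips theorem $\left(T_2(t)\right)_{t\geq 0}$ is a contraction semigroup. (The self-adjointness of $A$ recorded in Proposition \ref{prop:eaprop} moreover makes the semigroup self-adjoint, though this is not needed here.)

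For (iii)--(iv): it suffices to show that $A$ has compact resolvent, since an analytic $C_0$-semigroup whose generator has compact resolvent consists of compact operators for $t>0$. Fix any $\lambda\in\rho(A)$, nonempty since $A$ generates a semigroup. By construction of the operator associated with a form, $R(\lambda,A)$ maps $E_2$ boundedly into $D(A)\subset V=\left(H^1(0,1)\right)^m\cap D(L)$, where $V$ is equipped with the form norm $u\mapsto\bigl(\ea(u,u)+\|u\|_{E_2}^2\bigr)^{1/2}$; since $0<c_j\in C^1[0,1]$ is bounded below by a positive constant, $0\le p_j\in C[0,1]$, $M$ is negative semidefinite and the boundary value operator $L$ is bounded from $\left(H^1(0,1)\right)^m$ into $\real^n$, this norm is equivalent to the $\left(H^1(0,1)\right)^m$-norm on $V$. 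The embedding $\left(H^1(0,1)\right)^m\hookrightarrow\left(L^2(0,1)\right)^m\cong E_2$ is compact by the Rellich--Kondrachov theorem on the bounded interval $(0,1)$, and $V$ is a closed subspace of $\left(H^1(0,1)\right)^m$, so $V\hookrightarrow E_2$ is compact; composing, $R(\lambda,A)\colon E_2\to E_2$ is compact. Thus $\left(T_2(t)\right)_{t\geq 0}$ is analytic, compact, and a contraction semigroup, and well-posedness of \eqref{eq:acp} on $E_2$ is precisely the statement that $A$ generates a $C_0$-semigroup. The only step in this plan that is not a direct quotation of form theory is the compactness: one must check that the resolvent really factors through $V$ endowed with the form norm and then invoke the compact Sobolev embedding on the interval; this is routine but is where the actual work lies.
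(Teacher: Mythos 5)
Your proposal is correct and follows essentially the same route as the paper: both deduce generation of an analytic contraction semigroup from Proposition \ref{prop:eaprop} and obtain compactness from the compact embedding of the form domain $V$ into $E_2$ via Rellich--Kondrachov, the only difference being that you spell out the factorization of the resolvent through $V$ where the paper simply cites \cite[Theorem 1.2.1]{Da90}. One small slip: in step (ii) you write that ``$-A$ is already a generator''; it is $A$ that generates the semigroup, and it is $A$ that is $m$-dissipative --- the surrounding argument is otherwise fine.
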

\begin{proof}
The claim follows from Proposition \ref{prop:eaprop} and the fact that $\left(A, D(A)\right)$ is resolvent compact. This is true since $V$ is densely and compactly embedded in $E_2$ by the Rellich--Khondrakov Theorem, and we can use \cite[Theorem 1.2.1]{Da90}.
\end{proof}

In the following we will extend the semigroup $\left(T_2(t)\right)_{t\geq 0}$ on $L^p$-spaces. To this end we define

\begin{equation}\label{eq:Ep}
E_p\coloneqq\prod_{j=1}^m L^p(0,1; \mu_j dx),\quad p\in[1,\infty]
\end{equation}
and
\begin{equation}\label{eq:Epnorm}
\|u\|_{E_p}^p\coloneqq\sum_{j=1}^m \|u_j\|_{L^p(0,1; \mu_j dx)}^p,\quad u\in E_p,\quad p\in[1,\infty),
\end{equation}
\begin{equation}\label{eq:Einftynorm}
\|u\|_{E_{\infty}}\coloneqq\max_{j=1,\dots ,m} \|u_j\|_{L^{\infty}(0,1)},\quad u\in E_{\infty}.
\end{equation}

We can characterize features of the semigroup $\left(T_2(t)\right)_{t\geq 0}$ by those of $(\e^{tM})_{t\geq 0}$, the semigroup generated by the matrix $M$ -- hence, by properties of $M$. In particular, the following holds.

\begin{proposition}
The semigroup $\left(T_2(t)\right)_{t\geq 0}$ on $E_2$ associated with $\ea$ enjoys the following properties:
\begin{itemize}
\item $\left(T_2(t)\right)_{t\geq 0}$ is positive if and only if the matrix $M$ has positive off-diagonal -- that is, if it generates a positive matrix semigroup $(\e^{tM})_{t\geq 0}$;
\item Since $M$ is negative semidefinite, the semigroup $\left(T_2(t)\right)_{t\geq 0}$ is contractive on $E_{\infty}$ if and only if
\[b_{ii}+\sum_{k\neq i}|b_{ik}|\leq 0,\quad i=1,\dots ,n,\]
that is $(\e^{tM})_{t\geq 0}$ is $\ell^{\infty}$-contractive.
\end{itemize}
\end{proposition}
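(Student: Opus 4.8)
The plan is to derive both equivalences from the classical Beurling--Deny-type invariance criteria for semigroups associated with forms (see \cite{Ou05}), mirroring the arguments for the analogous statements in \cite{Mu07} (the case $p_j\equiv0$) and \cite{MR07} (the case $M$ diagonal). Since $E_2$ is a real Hilbert space and $\ea$ is, by Proposition \ref{prop:eaprop}, a real, symmetric, continuous, closed and accretive form with form domain $V=\bigl(H^1(0,1)\bigr)^m\cap D(L)$, the real-form versions of these criteria apply directly to $(T_2(t))_{t\ge0}$ (recalling that the operator associated with $\ea$ in the convention of the present paper is $A$, so that the semigroup in question is $T_2(t)$).

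First I would record the lattice invariance of $V$: it is stable under the componentwise truncations $u\mapsto u^+$ and $u\mapsto\tilde u:=(-1)\vee(u\wedge1)$, because each $H^1(0,1)$ is, because these operations act pointwise, and because they preserve the vertex-continuity relations defining $D(L)$; moreover $L(u^+)=(Lu)^+$ and $L(\tilde u)=(-1)\vee\bigl((Lu)\wedge1\bigr)$. Then comes the one genuine computation. Writing $q=Lu$ and using that $u_j^+$ and $u_j^-$ have a.e.\ disjoint supports, so $(u_j^+)'(u_j^-)'=u_j^+u_j^-=0$ a.e., the edge integrals in $\ea(u^+,u^-)$ drop out and $\ea(u^+,u^-)=-\langle Mq^+,q^-\rangle_{\real^n}$. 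Likewise, on $\{|u_j|\le1\}$ one has $u_j-\tilde u_j=0$, while on $\{|u_j|>1\}$ the function $\tilde u_j$ is locally constant; hence $\tilde u_j'(u_j-\tilde u_j)'=0$ a.e.\ and $\tilde u_j(u_j-\tilde u_j)\ge0$, so that, using $p_j\ge0$ and $c_j>0$, with $\tilde q=L\tilde u$ we obtain
\[
\ea\bigl(\tilde u,\,u-\tilde u\bigr)=\sum_{j=1}^m\int_0^1\mu_jp_j\,\tilde u_j\,(u_j-\tilde u_j)\dx-\bigl\langle M\tilde q,\,q-\tilde q\bigr\rangle_{\real^n},
\]
where the first term is nonnegative. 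By the criteria, $(T_2(t))$ is positive iff $\ea(u^+,u^-)\le0$ for all $u\in V$, and $E_\infty$-contractive iff $\ea(\tilde u,u-\tilde u)\ge0$ for all $u\in V$.

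For sufficiency: if $M$ has nonnegative off-diagonal entries, then $\langle Mq^+,q^-\rangle_{\real^n}=\sum_{i\neq k}b_{ik}q_k^+q_i^-\ge0$ (the diagonal terms vanish by disjoint support), giving positivity; and if $b_{ii}+\sum_{k\neq i}|b_{ik}|\le0$ for all $i$ — which is exactly the condition for $(\e^{tM})$ to be $\ell^\infty$-contractive, equivalently for $-\langle M\tilde a,a-\tilde a\rangle_{\real^n}\ge0$ to hold for all $a\in\real^n$ — the displayed identity is $\ge0$, giving $E_\infty$-contractivity. For necessity, any prescribed $q\in\real^n$ is of the form $q=Lu$ for a suitable $u\in V$: in the positivity case the edge terms vanish identically, so $\langle Mq^+,q^-\rangle_{\real^n}\ge0$ for all $q$, and testing with the vector that has a single $+1$ in one coordinate, a single $-1$ in another, and zeros elsewhere forces the off-diagonal entries of $M$ to be nonnegative. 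In the $E_\infty$ case I would take $u_j$ equal to the two prescribed endpoint values on intervals of length $\varepsilon$ near the endpoints of $\me_j$ and passing through $0$ in between; then $q$ and $\tilde q$ are unchanged but $\mathrm{supp}(u_j-\tilde u_j)$ has measure $O(\varepsilon)$, so letting $\varepsilon\to0$ the nonnegative $p_j$-term in the identity disappears and the criterion yields $-\langle M\tilde q,q-\tilde q\rangle_{\real^n}\ge0$ for every $q$, i.e.\ the stated condition on $M$.

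I expect the necessity half of the $E_\infty$-statement to be the main obstacle: unlike in the positivity case, the edge integrals involving $p_j$ and $c_j$ do not vanish for a generic competitor in $V$, so one must design test functions that carry the prescribed vertex data while making these edge contributions arbitrarily small. Everything else — the Beurling--Deny invariance criteria and the classical characterizations of positive (Metzler) and $\ell^\infty$-contractive matrix semigroups — is standard, so beyond this point the proof is essentially bookkeeping.
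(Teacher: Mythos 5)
Your proposal is correct and takes essentially the same approach as the paper: the paper's proof is a one-line appeal to the Beurling--Deny-type invariance criteria arguments of \cite[Theorem 3.5]{Mu07} and \cite[Lemma 4.1, Proposition 5.3]{MR07}, and what you write out is a faithful, detailed reconstruction of exactly those arguments, with the vertex form $-\langle Mq,r\rangle_{\real^n}$ isolated after the edge terms drop out. Your treatment of the only delicate point --- killing the nonnegative $p_j$-term in the necessity direction of the $E_\infty$ statement via test functions supported near the vertices --- is sound, since the gradient term vanishes identically and so the blow-up of $u_j'$ as $\varepsilon\to0$ is harmless.
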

\begin{proof}
It follows using analogous techniques as in the proof of \cite[Theorem 3.5]{Mu07} and \cite[Lemma 4.1, Proposition 5.3]{MR07}
\end{proof}

To obtain the desired extension of the semigroup on $L^p$-spaces, we assume the following on the matrix $M.$

\begin{asum}\label{as:M}
For the matrix $M=\left(b_{ij}\right)_{n\times n}$ we assume the following properties:
\begin{enumerate}
	\item $M$ satisfies Assumption \ref{asum:M1};
	\item For $i \neq k,$ $b_{ik}\geq 0$, that is, $M$ has positive off-diagonal;
	\item \[\sum_{k\neq i}b_{ik}\leq -b_{ii},\quad i=1,\dots ,n.\]
	that is, the matrix is \emph{diagonally dominant}.
\end{enumerate}
\end{asum}

\begin{proposition}\label{prop:sgrextend}
If $M$ satisfies Assumptions \ref{as:M} then the semigroup $(T_2(t))_{t\geq 0}$ extends to a family of compact, contractive, positive one-parameter semigroups $(T_p(t))_{t\geq 0}$ on $E_p$, $1\leq p\leq \infty$. Such semigroups are strongly continuous if $p\in[1,\infty)$, and analytic of angle $\frac{\pi}{2}-\arctan\frac{\vert p-2\vert}{2\sqrt{p-1}}$ for $p\in(1,\infty)$.

Moreover, the spectrum of $A_p$ is independent of $p$, where  $A_p$ denotes the generator of $(T_p(t))_{t\geq 0}$, $1\leq p\leq \infty$.% All the eigenfunctions of $A=A_2$ are of class $E_\infty$.
\end{proposition}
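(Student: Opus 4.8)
The plan is to transport the $E_2$-properties already established onto the whole $L^p$-scale using the symmetry of the form, duality, interpolation, and transference, so that the only genuinely analytic input is the sharp angle of analyticity.

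First I would note that, under Assumptions \ref{as:M}, the preceding proposition applies verbatim: positivity of the off-diagonal of $M$ makes $(T_2(t))_{t\ge 0}$ positive, while the diagonal-dominance assumption $\sum_{k\ne i}b_{ik}\le -b_{ii}$ is, for $M$ with nonnegative off-diagonal, exactly the condition $b_{ii}+\sum_{k\ne i}|b_{ik}|\le 0$, so $(T_2(t))_{t\ge 0}$ is $E_\infty$-contractive. (Equivalently, one verifies the Beurling--Deny criteria directly on $\ea$: the edge terms $\sum_j\int\mu_jc_ju_j'v_j'$ and $\sum_j\int\mu_jp_ju_jv_j$ satisfy them trivially, and the boundary term $-\langle Mq,r\rangle$ does so precisely because $M$ is diagonally dominant with positive off-diagonal.) Since $\ea$ is symmetric, $(T_2(t))_{t\ge 0}$ is self-adjoint by Proposition \ref{prop:eaprop}, so $E_\infty$-contractivity dualizes to $E_1$-contractivity, and Riesz--Thorin interpolation yields a consistent family of positive contractions $T_p(t)$ on $E_p$ for every $p\in[1,\infty]$; consistency transfers the semigroup law and positivity to each $E_p$, giving the contractive, positive one-parameter semigroups $(T_p(t))_{t\ge 0}$.

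Strong continuity for $p\in[1,\infty)$ I would get from density of $E_p\cap E_\infty$ in $E_p$: for $f$ in this set $T_p(t)f=T_2(t)f\to f$ in $E_2$ as $t\downarrow 0$ while $\|T_p(t)f\|_{E_\infty}\le\|f\|_{E_\infty}$, so dominated convergence over the finite measure space gives $T_p(t)f\to f$ in $E_p$, and the uniform contraction bound extends this to all of $E_p$. The key point is analyticity on $E_p$ for $p\in(1,\infty)$ with the stated angle: $(T_2(t))_{t\ge0}$ is self-adjoint, positivity-preserving and $E_\infty$-contractive, hence a symmetric submarkovian semigroup, and its consistent $E_p$-extension is then bounded analytic of angle $\tfrac\pi2-\arctan\tfrac{|p-2|}{2\sqrt{p-1}}$; this is obtained by Stein interpolation applied to $z\mapsto T_2(z)$ on sectors, interpolating the half-angle $\tfrac\pi2$ available at $p=2$ against the degenerate angle at $p=1,\infty$ (see, e.g., \cite{Ou05} and the references therein). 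This step is where the real work sits; the remaining items are bookkeeping, and strong continuity for $p\in(1,\infty)$ also follows from analyticity.

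For compactness with $p\in(1,\infty)$ I would use that $D(A_p)$ embeds continuously into $(W^{2,p}(0,1))^m\cap D(L)$, which is compactly embedded in $E_p$ by the Rellich--Khondrakov theorem on the bounded intervals; hence $A_p$ has compact resolvent and, $(T_p(t))_{t\ge0}$ being bounded analytic, $T_p(t)$ is compact for all $t>0$. At $p\in\{1,\infty\}$ the semigroup is ultracontractive --- which here follows from $D(A_2)\hookrightarrow(H^2(0,1))^m\hookrightarrow E_\infty$ combined with analyticity of $(T_2(t))_{t\ge0}$ --- so $T_1(2t)=T_1(t)T_1(t)$ factors as a bounded map $E_1\to E_2$, then the compact $T_2(t)$ on $E_2$, then the bounded inclusion $E_2\hookrightarrow E_1$, which gives compactness, and dually for $p=\infty$. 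Finally, $p$-independence of $\sigma(A_p)$ follows from the standard transference principle for consistent, ultracontractive, analytic semigroups on an $L^p$-scale over a finite measure space, as in \cite{MR07}: consistency of the resolvents together with the ultracontractive kernel bounds forces $\sigma(A_p)$ to be the same for all $p\in[1,\infty]$. Throughout, the argument runs parallel to \cite{Mu07,MR07}; the only new ingredient, the potentials $p_j$, enters exclusively through the sign-definite term $\sum_j\int\mu_jp_ju_jv_j$ of $\ea$ and affects none of the steps above.
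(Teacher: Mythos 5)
Your argument is correct and is essentially the paper's own proof in expanded form: the paper simply cites \cite[Section 7.2]{Ar04}, \cite[Theorem 4.1]{Mu07} and \cite[Corollary 5.6]{MR07}, and the machinery behind those references is exactly what you spell out (Beurling--Deny/submarkovianity of the form via diagonal dominance of $M$, duality and Riesz--Thorin extrapolation, the Liskevich--Perelmuter angle by Stein interpolation, and ultracontractivity for compactness and $p$-independence of the spectrum). No gaps worth flagging.
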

\begin{proof}
It follows by \cite[Section 7.2]{Ar04} as in \cite[Theorem 4.1]{Mu07} and \cite[Corollary 5.6]{MR07}.
\end{proof}

We also can prove that the generators of the semigroups in the spaces $E_p,\, 1\leq p \leq \infty$ have in fact the same form as in $E_2$, with appropriate domain.

\begin{lemma}\label{lem:opAp}
For all $p\in [1,\infty]$ the generator $A_p$ of the semigroup
$(T_p(t))_{t\geq 0}$ is given by the operator defined in~\eqref{eq:opAmax} with
domain
\begin{equation}\label{eq:domAp}
D(A_p)=\left\{u\in \prod _{j=1}^m W^{2,p}(0,1;\mu_j dx)\cap D(L):MLu=Cu\right\}.
\end{equation}
In particular, $A_p$ has compact resolvent for $p\in [1,\infty]$.
\end{lemma}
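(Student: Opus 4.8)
The plan is to identify, for each $p\in[1,\infty]$, the generator $A_p$ of the already-constructed semigroup $(T_p(t))_{t\ge 0}$ with the operator $A_{max}$ of \eqref{eq:opAmax} restricted to the domain \eqref{eq:domAp}. Since consistency of the semigroups $(T_p(t))_{t\ge0}$ on the scale $E_p$ is built into Proposition~\ref{prop:sgrextend} (they all restrict/extend $(T_2(t))_{t\ge0}$), the natural strategy is a resolvent-consistency argument: show that for $\lambda$ in a common resolvent set the resolvent $R(\lambda,A_2)$ maps $E_2\cap E_p$ into the space on the right-hand side of \eqref{eq:domAp} and acts there as the inverse of $\lambda - A_{max}$, and conversely that $\lambda-A_{max}$ with domain \eqref{eq:domAp} is injective with range all of $E_p$. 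Because $A_2$ is already known (Proposition~\ref{prop:determL2}) to be $A_{max}$ on $D(A)=D(A_{max})\cap\{MLu=Cu\}$, the case $p=2$ is the base case, and for general $p$ one transports this description along the consistency relations.

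The key steps, in order, are as follows. First, I would fix $\lambda>0$ large (in $\rho(A_p)$ for all $p$, which exists since all $T_p$ are contraction semigroups) and, given $f\in E_p$, solve the resolvent equation $\lambda u - A_{max}u = f$ explicitly on each edge: on $\me_j$ this is the ODE $\lambda u_j - (c_ju_j')' + p_j u_j = f_j$ on $(0,1)$, a second-order elliptic ODE with $C^1$ leading coefficient and continuous zeroth-order term. Standard ODE theory gives a two-dimensional solution space of $H^2$ (indeed $W^{2,p}$, by elliptic regularity on the interval: $f_j\in L^p\Rightarrow u_j\in W^{2,p}$) solutions, and one then has $4m$ free constants wait—$2m$ free constants to fix using the $n$ continuity conditions at the vertices encoded in $D(L)$ together with the $n$ Kirchhoff/feedback conditions $MLu=Cu$; here one must check that the resulting linear system is uniquely solvable, which follows because $\lambda-A_2$ is already invertible (the $p=2$ solution, which sits in $H^2\hookrightarrow C^1$, must coincide with this constructed solution by uniqueness of the ODE problem). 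This shows $\lambda-A_{max}$ on \eqref{eq:domAp} is bijective onto $E_p$.

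Second, I would invoke consistency: the operator $S_\lambda$ so constructed on $E_p$ agrees with $R(\lambda,A_2)$ on the dense subspace $E_p\cap E_2$ (both solve the same pointwise ODE with the same boundary data, by uniqueness), hence $S_\lambda$ is the unique bounded extension/restriction of $R(\lambda,A_2)$ to $E_p$, which by definition of a consistent semigroup is $R(\lambda,A_p)$. Therefore $A_p = \lambda - S_\lambda^{-1}$ is exactly $A_{max}$ with domain \eqref{eq:domAp}. For $p=\infty$ the semigroup is not strongly continuous, so ``generator'' must be read in the appropriate sense (e.g.\ as the full generator, or the generator of the sun-dual, or simply via $R(\lambda, A_\infty) := S_\lambda$); the same ODE argument applies verbatim once one fixes that convention, which presumably matches the cited references \cite{Ar04,Mu07,MR07}. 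Compactness of the resolvent is then immediate: $R(\lambda,A_p)$ maps $E_p$ into $\prod_j W^{2,p}(0,1;\mu_j\,dx)$, which embeds compactly into $E_p$ by Rellich--Kondrachov on the interval (for $p<\infty$; for $p=\infty$ one uses $W^{2,\infty}\hookrightarrow C^1\hookrightarrow\hookrightarrow C\cong E_\infty$).

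The main obstacle I anticipate is bookkeeping the boundary/transmission conditions carefully enough to see that the $2m\times 2m$ (or $2m\times(2m)$ with the $2n$ vertex relations after using continuity to reduce the $q_i$) linear system for the integration constants is nonsingular for all relevant $p$ simultaneously, and in particular that the description of $D(A_p)$ via $MLu = Cu$ is the correct one rather than some $p$-dependent modification; the clean way around this is precisely to lean on the $p=2$ case (Proposition~\ref{prop:determL2}) plus uniqueness for the edgewise ODE, so that no new linear-algebra computation is needed — the nonsingularity for $p\ne 2$ is inherited. A secondary technical point is the $p=\infty$ endpoint, where one must be explicit about which notion of generator is meant; apart from that, the argument is a routine elliptic-regularity-plus-consistency exercise of the type carried out in \cite{Mu07,MR07}.
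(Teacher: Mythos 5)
Your proposal is correct and follows essentially the same route as the paper, which for this lemma gives no argument of its own but simply cites \cite[Proposition 4.6]{Mu07} and \cite[Lemma 5.7]{MR07}; those proofs proceed exactly as you describe, by edgewise elliptic regularity for the resolvent equation combined with consistency of the resolvents across the $E_p$-scale, with the nonsingularity of the system of vertex conditions inherited from the invertibility of $\lambda-A_2$. Your caveat about the meaning of ``generator'' at $p=\infty$ (where $(T_\infty(t))_{t\ge 0}$ is not strongly continuous) is apt and is handled in the cited references in the way you anticipate.
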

\begin{proof}
See \cite[Proposition 4.6]{Mu07} and \cite[Lemma 5.7]{MR07}.
\end{proof}

As a summary we obtain the following theorem.

\begin{theorem}\label{theo:determLp}
The first order problem~\eqref{netcp} is well-posed on $E_p$, $p\in[1,\infty)$, i.e., for all initial data
$\mathsf{u}\in E_p$ the problem~\eqref{netcp} admits a unique mild solution that continuously depends on the initial data.
\end{theorem}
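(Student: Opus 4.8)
The plan is to deduce well-posedness from the abstract semigroup theory already assembled. By Proposition \ref{prop:sgrextend}, under Assumptions \ref{as:M} the operator $A_p$ generates a strongly continuous semigroup $(T_p(t))_{t\ge 0}$ on $E_p$ for every $p\in[1,\infty)$ (and Lemma \ref{lem:opAp} identifies $A_p$ concretely as $A_{max}$ with the Kirchhoff-type domain), and by Lemma \ref{lem:opAp} this generator has compact resolvent. First I would recall the standard fact (e.g. \cite[Ch.~II]{EN00}-type reasoning, or directly the definition of well-posedness of an abstract Cauchy problem) that generation of a $C_0$-semigroup is \emph{equivalent} to well-posedness of the associated abstract Cauchy problem \eqref{eq:acp}: for each $\mathsf u\in E_p$ the function $u(t)\coloneqq T_p(t)\mathsf u$ is the unique mild solution, it is a classical solution when $\mathsf u\in D(A_p)$, and the map $\mathsf u\mapsto u(\cdot)$ is continuous from $E_p$ into $C([0,T];E_p)$ because $\|T_p(t)\|\le 1$ (the semigroups are contractive).

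The second step is simply to observe that the abstract problem \eqref{eq:acp} on $E_p$ is a faithful reformulation of the network problem \eqref{netcp}: this is exactly the content of the construction in Subsection \ref{subsec:systeq} together with the domain description \eqref{eq:domAp}, which encodes $(\ref{netcp}b)$ through $u\in D(L)$ and the Kirchhoff law $(\ref{netcp}c)$ through the condition $MLu=Cu$ (recall \eqref{eq:Kir} and the definition \eqref{eq:opC} of $C$). Membership $u\in\prod_j W^{2,p}(0,1;\mu_j\,dx)$ makes $(\ref{netcp}a)$ meaningful, and $(\ref{netcp}d)$ is the initial condition $u(0)=\mathsf u$. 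Hence a mild (resp. classical) solution of \eqref{eq:acp} on $E_p$ \emph{is} a mild (resp. strong) solution of \eqref{netcp}, and conversely.

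Combining the two steps gives the statement: for every $\mathsf u\in E_p$ with $p\in[1,\infty)$, problem \eqref{netcp} has the unique mild solution $u(t)=T_p(t)\mathsf u$, depending continuously on $\mathsf u$. I do not expect a genuine obstacle here — the theorem is a bookkeeping corollary of Proposition \ref{prop:sgrextend} and Lemma \ref{lem:opAp}. The only point requiring a line of care is the boundary case $p=1$, where $(T_1(t))_{t\ge 0}$ is still strongly continuous (Proposition \ref{prop:sgrextend}) but not analytic; strong continuity already suffices for the notion of mild solution and for well-posedness, so the argument goes through uniformly for all $p\in[1,\infty)$. (The case $p=\infty$ is excluded precisely because $(T_\infty(t))_{t\ge0}$ need not be strongly continuous.)
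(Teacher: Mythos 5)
Your proposal is correct and follows the same route as the paper, which states this theorem ``as a summary'' with no separate proof, relying exactly on Proposition \ref{prop:sgrextend} (generation of strongly continuous contraction semigroups on $E_p$ for $p\in[1,\infty)$) and Lemma \ref{lem:opAp} (identification of the generator, encoding the continuity and Kirchhoff conditions in $D(A_p)$). Your explicit remarks on the standard equivalence between $C_0$-semigroup generation and well-posedness, and on why $p=\infty$ is excluded, are accurate elaborations of what the paper leaves implicit.
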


\section{The stochastic Allen-Cahn equation on networks}\label{sec:stochnetwork}

\subsection{An abstract stochastic Cauchy problem}\label{subsec:KVN}

Let $(\Omega,\mathscr{F},\mathbb{P})$ is a complete probability space endowed with a right continuous filtration $\mathbb{F}=(\mathscr{F}_t)_{t\in [0,T]}$. Let $(W_H(t))_{t\in [0,T]}$ be a cylindrical Wiener process, defined on $(\Omega,\mathscr{F},\mathbb{P})$, in some Hilbert space $H$ with respect to the filtration $\mathbb{F}$; that is,
  $(W_H(t))_{t\in [0,T]}$ is $(\mathscr{F}_t)_{t\in [0,T]}$-adapted and for all $t>s$, $W_H(t)-W_H(s)$ is independent of $\mathscr{F}_s$.
To be able to handle the stochastic Allen-Cahn equation on networks, first we cite a result of M.~Kunze and J.~van Neerven, regarding the following abstract equation
\begin{equation}\tag{SCP}\label{eq:SCP}
\left\{
\begin{aligned}
d X(t)&=[A X(t)+F(t,X(t))]dt+G(t,X(t))d W_{H}(t)\\
X(0)&=\xi,
\end{aligned}
\right.
\end{equation}
see \cite[Section 3]{KvN12}.
If we assume that $(A, D(A))$ generates a strongly continuous, analytic semigroup $S$ on the Banach space $E$ with $\Vert S(t)\Vert\leq K\e^{\omega t}$, $t\geq 0$ for some $K\geq 1$ and $\omega\in\real$, then for $\omega'>\omega$ the fractional powers $(\omega'-A)^{\alpha}$ are well-defined for all $\alpha\in(0,1).$ In particular, the fractional domain spaces
\begin{equation}\label{eq:fractdom}
E^{\alpha}\coloneqq D((\omega'-A)^{\alpha}),\quad \|v\|_{\alpha}\coloneqq\|(\omega'-A)^{\alpha}v\|,\quad v\in D((\omega'-A)^{\alpha})
\end{equation}
are Banach spaces. It is well-known (see e.g. \cite[$\mathsection$II.4--5.]{EN00}), that up to equivalent norms, these spaces are independent of the choice of $\omega'.$

For $\alpha\in(0,1)$ we define the extrapolation spaces $E^{-\alpha}$ as the completion of $E$ under the norms $\|v\|_{-\alpha}\coloneqq\|(\omega'-A)^{-\alpha}v\|$, $v\in E$. These spaces are independent of $\omega'>\omega$ up to an equivalent norm.

We fix $E^0\coloneqq E$.

\begin{remark}\label{rem:omega0}
If $\omega=0$ (hence, the semigroup $S$ is bounded), then by \cite[Proposition 3.1.7]{Haase06}
we can choose $\omega'=0$. That is,
\[E^{\alpha}\cong D((-A)^{\alpha}),\quad \alpha\in [0,1),\]
when $D((-A)^\alpha)$ is equipped with the graph norm.
\end{remark}

To obtain the desired result for the solution of \eqref{eq:SCP}, one has to impose the following assumptions for the mappings in (SCP). These are -- in the first and third cases slightly simplified versions of -- Assumptions (A1), (A5), (A4), (F'), (F'') and (G'') in \cite{KvN12}. Let $B$ be a Banach space, $\|\cdot \|$ will denote $\|\cdot\|_{B}$. For $u\in B$ we define the \emph{subdifferential of the norm at} $u$ as the set
\begin{equation}\label{eq:subdiff}
\partial\|u\|\coloneqq\left\{u^*\in B^*:\|u^*\|=1\text{ and }\langle u,u^*\rangle=1\right\}
\end{equation}
which is not empty by the Hahn-Banach theorem. Furthermore, let $E$ be a UMD Banach space of type $2$.

\begin{assum}$ $\label{assum:mainvN}
\begin{enumerate}
	\item $(A, D(A))$ is densely defined, closed and sectorial on $E$.
	\item For some $0\leq \theta <\frac{1}{2}$ we have continuous, dense embeddings \[E^{\theta}\hookrightarrow B\hookrightarrow E.\]
	\item Let $S$ be the strongly continuous analytic semigroup generated by $(A, D(A))$. Then $S$ restricts to a strongly continuous contraction semigroup $S^{B}$ on $B$, in particular, $A|_{B}$ is dissipative.
	\item The map $F\colon [0,T]\times\Omega\times B\to B$ is locally Lipschitz continuous in the sense that for all $r>0$, there exists a constant $L_{F}^{(r)}$ such that
	      \[\left\|F(t,\omega,u)-F(t,\omega,v)\right\|\leq L_{F}^{(r)}\|u-v\|\]
	      for all $\|u\|,\|v\|\leq r$ and $(t,\omega)\in [0,T]\times \Omega$ and there exists a constant $C_{F,0}\geq 0$ such that
				\[\left\|F(t,\omega,0)\right\|\leq C_{F,0},\qquad t\in[0,T],\; \omega\in\Omega.\]
				Moreover, for all $u\in B$ the map $(t,\omega)\mapsto F(t,\omega,u)$ is strongly measurable and adapted.\\
				Finally, for suitable constants $a,b\geq 0$ and $N\geq 1$ we have
				\[\langle A u+F (t,u+v), u^*\rangle\leq a(1+\|v\|)^N+b\|u\|\]
				for all $u\in D(A|_{B})$, $v\in B$ and $u^*\in\partial\|u\|,$ see \eqref{eq:subdiff}.
	\item There exist constants $a'',\, b'',\, m'>0$ such that the function $F\colon [0,T]\times\Omega\times B\to B$ satisfies
	      \[\langle F (t,\omega, u+v)-F (t,\omega, v), u^*\rangle\leq a''(1+\|v\|)^{m'}-b''\|u\|^{m'} \]
				for all $t\in[0,T]$, $\omega\in\Omega$, $u,v\in B$ and $u^*\in\partial\|u\|,$ and
				\[\left\|F(t,v)\right\|\leq a''(1+\|v\|)^{m'}\]
				for all $v\in B.$
	\item Let $\gamma(H,E^{-\kappa_G})$ denote the space of $\gamma$-radonifying operators from $H$ to $E^{-\kappa_G}$ for some $0\leq \kappa_G<\frac{1}{2}$, see e.g. \cite[Section 3.1]{KvN12}.
	      Then the map $G\colon [0,T]\times\Omega\times B\to \gamma(H,E^{-\kappa_G})$ is locally Lipschitz continuous in the sense that for all $r>0$, there exists a constant $L_{G}^{(r)}$ such that
	      \[\left\|G(t,\omega,u)-G(t,\omega,v)\right\|_{\gamma(H,E^{-\kappa_G})}\leq L_{G}^{(r)}\|u-v\|\]
	      for all $\|u\|,\|v\|\leq r$ and $(t,\omega)\in [0,T]\times \Omega$.
				Moreover, for all $u\in B$ and $h\in H$ the map $(t,\omega)\mapsto G(t,\omega,u)h$ is strongly measurable and adapted.\\[0.2cm]
				Finally, $G$ is of linear growth, that is, for suitable constant $c',$
				\[\left\|G(t,\omega,u)\right\|_{\gamma(H,E^{-\kappa_G})}\leq c'\left(1+\|u\|\right)\]
				for all $(t,\omega,u)\in [0,T]\times \Omega\times B.$
\end{enumerate}
\end{assum}

Recall that a \emph{mild solution} of \eqref{eq:SCP} is a solution of the following implicit equation
\begin{align}\label{eq:mildsol}
X(t)&=S(t)\xi+\int_0^tS(t-s)F(s,X(s))\ds+\int_0^tS(t-s)G(s,X(s))\,dW_H(s)\notag\\
&\eqqcolon S(t)\xi+S\ast F(\cdot,X(\cdot))(t)+S\diamond G(\cdot,X(\cdot))(t)
\end{align}
where
\[S\ast f(t)=\int_0^tS(t-s)f(s)\ds\]
denotes the "usual" convolution, and
\[S\diamond g(t)=\int_0^tS(t-s)g(s)\,dW_{H}(s)\]
denotes the stochastic convolution with respect to $W_{H}.$

The result of Kunze and van Neerven that will be useful for our setting is the following. We note that this was first proved in \cite[Theorem 4.9]{KvN12} but with a typo in the statement which was later corrected in the recent arXiv preprint \cite[Theorem 4.9]{KvN19}.

\begin{theorem}\cite[Theorem 4.9]{KvN19}\label{theo:KvN4.9}
Suppose that Assumptions \ref{assum:mainvN} hold and let $2<q<\infty$, $0\leq \theta<\frac{1}{2}$, $0\leq \kappa_G<\frac{1}{2}$ satisfy 
\[\theta+\kappa_G<\frac{1}{2}-\frac{1}{q}.\]
Then for all $\xi\in L^q(\Omega,\mathscr{F}_0,\mathbb{P}; B)$ there exists a unique global mild solution 
\[X\in L^q\left(\Omega,C([0,T];B)\right)\] 
of \eqref{eq:SCP}. Moreover, for some constant $C>0$ we have
\[\mathbb{E}\|X\|^q_{C([0,T];B)}\leq C\cdot\left(1+\mathbb{E}\|\xi\|^q\right).\]
\end{theorem}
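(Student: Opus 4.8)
The plan is a two-stage bootstrap: local well-posedness by a contraction mapping argument, followed by a global a priori estimate that simultaneously removes the explosion time and produces the claimed $q$-th moment bound. For the local step, fix $q>2$ and $\theta,\kappa_G\in[0,\tfrac12)$ with $\theta+\kappa_G<\tfrac12-\tfrac1q$, truncate $F$ and $G$ outside the ball of radius $r$ in $B$ by a smooth cutoff so that the truncated maps become globally Lipschitz (using Assumption~\ref{assum:mainvN}(4) and (6)), and seek $X$ as a fixed point of $X\mapsto S(\cdot)\xi+S\ast F(\cdot,X(\cdot))+S\diamond G(\cdot,X(\cdot))$ in $L^q(\Omega;C([0,t_0];B))$ for small $t_0$. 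The deterministic convolution is controlled by the contractivity of $S$ on $B$ (Assumption~\ref{assum:mainvN}(3)): $X\mapsto S\ast F(\cdot,X(\cdot))$ maps into $C([0,t_0];B)$ with Lipschitz constant $O(t_0)$. For the stochastic convolution one invokes that $E$ is a UMD space of type~$2$: the stochastic maximal $L^q$-regularity / factorization estimate makes $\Phi\mapsto S\diamond\Phi$ bounded from $L^q(\Omega;L^q(0,t_0;\gamma(H,E^{-\kappa_G})))$ into $L^q(\Omega;C([0,t_0];E^\theta))$, with norm tending to $0$ as $t_0\downarrow0$. This is exactly where the hypothesis $\theta+\kappa_G<\tfrac12-\tfrac1q$ is used: it makes the interval of admissible factorization exponents $\alpha\in(\tfrac1q+\theta+\kappa_G,\tfrac12)$ nonempty, so that the analytic smoothing bound $\|S(s)y\|_\theta\le C\,s^{-(\theta+\kappa_G)}\|y\|_{-\kappa_G}$ is $L^{q'}$-integrable near $0$ after multiplication by the factorization kernel. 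Composing with $E^\theta\hookrightarrow B$ (Assumption~\ref{assum:mainvN}(2)) and the linear growth of $G$, Banach's fixed point theorem gives a unique local mild solution; a standard concatenation argument extends it to a maximal mild solution defined up to $\sigma=\lim_n\tau_n$, where $\tau_n\coloneqq\inf\{t:\|X(t)\|_B\ge n\}$.

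The global step is the heart of the argument. Split $X(t)=Z(t)+v(t)$ with $Z(t)\coloneqq S\diamond G(\cdot,X(\cdot))(t)$; by the maximal inequality $Z$ has, on each $[0,\tau_n]$, paths in $C([0,\tau_n];E^\theta)\hookrightarrow C([0,\tau_n];B)$, and $v=X-Z$ is a pathwise mild solution of $\dot v=Av+F(t,v+Z)$, $v(0)=\xi$. Pairing this equation with $v^*\in\partial\|v(t)\|$ — made rigorous through the Yosida approximants of $A|_B$ and the dissipativity of Assumption~\ref{assum:mainvN}(3) — and using the one-sided estimate $\langle F(t,v+Z)-F(t,Z),v^*\rangle\le a''(1+\|Z\|)^{m'}-b''\|v\|^{m'}$ of Assumption~\ref{assum:mainvN}(5) together with $\|F(t,Z)\|\le a''(1+\|Z\|)^{m'}$, one arrives at the differential inequality $\tfrac{d}{dt}\|v(t)\|\le 2a''(1+\|Z(t)\|)^{m'}-b''\|v(t)\|^{m'}$, whose solutions obey $\sup_{t\le T}\|v(t)\|_B\le\|\xi\|_B+C(1+\sup_{t\le T}\|Z(t)\|_B)$ pathwise, with $C$ independent of $n$. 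Hence $\sup_{t\le T\wedge\tau_n}\|X(t)\|_B\le\|\xi\|_B+C(1+\sup_{t\le T\wedge\tau_n}\|Z(t)\|_B)$. (Assumption~\ref{assum:mainvN}(4)'s dissipativity inequality gives a weaker, but still sufficient, version of this when one only wishes to exclude explosion.)

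It remains to close the estimate. Using the stochastic maximal $L^q$-regularity bound in its integrated form together with $\|G(s,X(s))\|_{\gamma(H,E^{-\kappa_G})}\le c'(1+\|X(s)\|_B)$ gives, for every $t'\le T$, $\mathbb{E}\sup_{t\le t'\wedge\tau_n}\|Z(t)\|_B^q\le C\int_0^{t'}\bigl(1+\mathbb{E}\sup_{s'\le s\wedge\tau_n}\|X(s')\|_B^q\bigr)\ds$; combined with the pathwise bound of the previous paragraph and $\|S(t)\xi\|_B\le\|\xi\|_B$, and observing that $\psi_n(t')\coloneqq\mathbb{E}\sup_{t\le t'\wedge\tau_n}\|X(t)\|_B^q$ is finite ($\le n^q$), Gronwall's lemma yields $\psi_n(T)\le C(1+\mathbb{E}\|\xi\|_B^q)$ with $C$ independent of $n$. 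From $n^q\,\mathbb{P}(\tau_n<T)\le\psi_n(T)$ we get $\mathbb{P}(\tau_n<T)\to0$, so $\sigma=T$ a.s.; and by Fatou, $\mathbb{E}\sup_{t\le T}\|X(t)\|_B^q=\lim_n\psi_n(T)\le C(1+\mathbb{E}\|\xi\|^q)$, which is the asserted bound. Uniqueness propagates from the local statement by the same concatenation, giving a unique global mild solution $X\in L^q(\Omega;C([0,T];B))$.

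The step I expect to be the main obstacle is the global a priori bound: justifying the subdifferential-of-the-norm computation for $t\mapsto\|v(t)\|_B$ in a general, non-Hilbertian Banach space $B$ (Yosida approximation of $A|_B$, interchanging limits, a measurable selection $t\mapsto v^*(t)\in\partial\|v(t)\|$), and keeping the polynomial exponents $N$ and $m'$ from Assumptions~\ref{assum:mainvN}(4)–(5) correctly balanced against the moment order $q$, so that the dissipative term $-b''\|v\|^{m'}$ genuinely absorbs the super-linear growth contributed by the stochastic term $Z$. A secondary technical point is the sharpness of the parameter constraint $\theta+\kappa_G<\tfrac12-\tfrac1q$ in the factorization step, which must be tracked carefully to ensure the stochastic convolution lands in $C([0,T];E^\theta)$ and hence in $C([0,T];B)$.
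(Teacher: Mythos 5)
This theorem is not proved in the paper at all: it is imported verbatim from Kunze and van Neerven (\cite[Theorem 4.9]{KvN19}, first stated in \cite{KvN12}), so there is no internal proof to compare your argument against. What you have written is, in effect, a reconstruction of the Kunze--van Neerven proof itself, and as a reconstruction it follows their strategy faithfully: truncation plus Banach fixed point for local well-posedness (with the parameter constraint $\theta+\kappa_G<\tfrac12-\tfrac1q$ entering exactly where you say it does, namely in making the factorization exponent interval nonempty so that the stochastic convolution lands in $C([0,t_0];E^\theta)\hookrightarrow C([0,t_0];B)$), followed by the decomposition $X=v+Z$ with $Z$ the stochastic convolution, the pathwise subdifferential estimate for $v$ using the dissipativity of $A|_B$ (Assumption \ref{assum:mainvN}(3)) and the absorption inequality of Assumption \ref{assum:mainvN}(5), and a Gronwall closure over the stopping times $\tau_n$. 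The quantitative conclusions are right: the equilibrium comparison for $\phi'\le 2a''(1+z)^{m'}-b''\phi^{m'}$ does give a bound \emph{linear} in $\sup\|Z\|_B$, which is precisely what Assumption \ref{assum:mainvN}(5) buys over the weaker Assumption \ref{assum:mainvN}(4) and why the final moment bound is $C(1+\mathbb{E}\|\xi\|^q)$ rather than involving $\mathbb{E}\|\xi\|^{Nq}$.

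The places you flag as obstacles are indeed where the published proof does its real work and where your sketch remains a sketch: the rigorous derivation of the one-sided differential inequality for $t\mapsto\|v(t)\|_B$ when $v$ is only a \emph{mild} solution (this requires an approximation argument --- Yosida approximants or regularized data --- together with upper Dini derivatives and a measurable choice of $v^*(t)\in\partial\|v(t)\|$), and the stopping-time bookkeeping needed so that the maximal estimate for the stochastic convolution applies to the process stopped at $\tau_n$ (one convolves $G(s,X(s))\mathbf{1}_{\{s\le\tau_n\}}$ rather than $G(s,X(s))$). Neither of these is a wrong turn, but they are not merely routine either, and a complete proof must supply them; since the present paper deliberately delegates all of this to \cite{KvN19}, your write-up should either do the same or fill in these two steps in detail.
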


\subsection{Preparatory results}\label{subsec:prep}

In order to apply the abstract result of Theorem \ref{theo:KvN4.9} to the stochastic Allen-Cahn equation on a network we need to prove some preparatory results using the setting of Section \ref{sec:determnetwork}.

On the edges of the graph $G$ we will consider continuous functions that satisfy the continuity condition in the vertices, see Subsection \ref{subsec:systeq}. We will refer to such functions as \emph{continuous functions on the graph} $G$ and denote them by $C(G).$

\begin{definition}\label{def:b}
We define
\begin{equation}\label{eq:CG}
C(G)\coloneqq D(L),
\end{equation} 
see \eqref{eq:Ldef}, which can be looked at as the Banach space of all continuous functions on the graph $G$, hence the norm on $C(G)$ can be defined as
\begin{equation}\label{eq:normCG}
\|u\|_{C(G)}=\max_{j=1,\dots ,m}\sup_{[0,1]}|u_j|,\quad u\in C(G).
\end{equation}
This space will play the role of the space $B$ in our setting, hence we set
\begin{equation}\label{eq:spB}
B\coloneqq C(G)\text{ and }\|\cdot\|_{C(G)}\coloneqq\|\cdot\|_{B}.
\end{equation}
\end{definition} 

We will show that for $\theta$ big enough the continuous, dense embeddings 
\[E_p^{\theta}\hookrightarrow B\hookrightarrow E_p\]
hold, where 
\begin{equation}\label{eq:fractdomEp}
E_p^{\theta}\text{ is defined for the operator }A_p\text{ on the Banach space }E_p\text{ as in \eqref{eq:fractdom}.} 
\end{equation}To do so, we first need a technical lemma, and define the maximal operator on $E_p$ as
\begin{equation}\label{eq:opApmax}
A_{p,max}\coloneqq\begin{pmatrix}
\frac{d}{dx}\left(c_1 \frac{d}{dx}\right)-p_m & & 0\\
 & \ddots &\\
0 & & \frac{d}{dx}\left(c_m \frac{d}{dx}\right)-p_m \\
\end{pmatrix}
\end{equation}
with domain
\begin{equation}\label{eq:domApmax}
D(A_{p,max})\coloneqq \left(\prod _{j=1}^m W^{2,p}(0,1;\mu_j dx)\right)\cap D(L),
\end{equation}
see \eqref{eq:opAmax} \eqref{eq:domAmax} in $E_2.$ Hence, the domain of $A_{p,max}$ only contains the continuity condition in the nodes.

Furthermore, define
\begin{equation}\label{eq:W0G}
W_0(G)\coloneqq \prod _{j=1}^m W_{0}^{2,p}(0,1;\mu_j dx),
\end{equation}
where 
\[W_{0}^{2,p}(0,1;\mu_j dx)=W^{2,p}(0,1;\mu_j dx)\cap W_{0}^{1,p}(0,1;\mu_j dx),\qquad j=1,\dots ,m.\] 
That is, $W_0(G)$ contains such vectors of functions that are twice weakly differentiable on each edge and continuous on the graph with Dirichlet boundary conditions.

\begin{lemma}\label{lem:ApmaxW0G}
\begin{equation}
D(A_{p,max})\cong W_0(G)\times\real^n,
\end{equation}
where the isomorphism is taken for $D(A_{p,max})$ equipped with the operator graph norm.
\end{lemma}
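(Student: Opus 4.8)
The plan is to construct the isomorphism explicitly by splitting a function $u \in D(A_{p,max})$ into a part that vanishes at the vertices and a correction that carries the vertex values. Given $u \in D(A_{p,max})$, write $Lu = q \in \real^n$. The idea is to pick, for each vertex $\mv_i$, a fixed smooth "bump" profile on each adjacent edge realizing the value $q_i$ at that endpoint, so that one obtains a bounded linear lifting operator $R\colon \real^n \to \prod_{j=1}^m W^{2,p}(0,1;\mu_j\,dx) \cap D(L)$ with $L(Rq) = q$; for instance, on edge $\me_j$ one can take an affine or cutoff-polynomial interpolation between the values prescribed at its two endpoints (using $0$ if an endpoint is not a vertex under our convention). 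Then $u - Rq$ lies in $\prod_j W^{2,p}(0,1;\mu_j\,dx)$, is continuous on the graph, and vanishes at every vertex, hence $u - Rq \in W_0(G)$. This gives the candidate map $u \mapsto (u - R(Lu), Lu) \in W_0(G) \times \real^n$, with inverse $(w, q) \mapsto w + Rq$.

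The steps, in order, would be: (i) verify $W_0(G) \subset D(A_{p,max})$ and that $Rq \in D(A_{p,max})$ for every $q$, so both maps land in the right spaces; (ii) check the two maps are mutually inverse — this is immediate once one notes $Lw = 0$ for $w \in W_0(G)$ and $L(Rq) = q$; (iii) establish boundedness of $u \mapsto (u - R(Lu), Lu)$ with respect to the operator graph norm $\|u\|_{D(A_{p,max})} = \|u\|_{E_p} + \|A_{p,max}u\|_{E_p}$. For (iii) the key observation is that $L$ is bounded from $D(A_{p,max})$ to $\real^n$: this follows because on each edge $D(A_{p,max})$ embeds into $W^{2,p}(0,1) \hookrightarrow C^1[0,1]$ (Sobolev embedding in one dimension), and the graph norm controls the $W^{2,p}$-norm edgewise — indeed $\|u_j''\|_{L^p}$ is controlled by $\|A_{p,max}u\|_{E_p}$, $\|c_j u_j'\|_{L^p}$, $\|p_j u_j\|_{L^p}$, so together with $\|u\|_{E_p}$ and elliptic/interpolation estimates one bounds $\|u_j\|_{W^{2,p}}$. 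Hence $\|Lu\|_{\real^n} \lesssim \|u\|_{D(A_{p,max})}$, and since $R$ is bounded into $D(A_{p,max})$ (it maps into a fixed finite-dimensional space of smooth functions), $\|u - R(Lu)\|_{W_0(G)} \lesssim \|u\|_{D(A_{p,max})}$ as well; boundedness of the inverse $(w,q) \mapsto w + Rq$ is clear since $\|A_{p,max}(w+Rq)\|_{E_p} \lesssim \|w\|_{W_0(G)} + \|q\|_{\real^n}$.

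The main obstacle I expect is step (iii): making rigorous that the graph norm of $A_{p,max}$ is equivalent, on its domain, to the product $W^{2,p}$-norm $\sum_j \|u_j\|_{W^{2,p}(0,1;\mu_j\,dx)}$. The inequality $\|A_{p,max}u\|_{E_p} \lesssim \sum_j\|u_j\|_{W^{2,p}}$ is trivial from the boundedness of $c_j \in C^1$ and $p_j \in C$; the reverse requires controlling $\|u_j''\|_{L^p}$ from $\|(c_j u_j')' - p_j u_j\|_{L^p}$ and $\|u_j\|_{L^p}$, which in turn needs a bound on $\|u_j'\|_{L^p}$. Since $c_j \in C^1[0,1]$ is bounded below away from zero, $(c_j u_j')' = c_j u_j'' + c_j' u_j'$, so one recovers $\|u_j''\|_{L^p} \lesssim \|A_{p,max}u\|_{E_p} + \|p_j u_j\|_{L^p} + \|u_j'\|_{L^p}$, and the first-derivative term is then absorbed by the standard interpolation inequality $\|u_j'\|_{L^p} \leq \epsilon\|u_j''\|_{L^p} + C_\epsilon\|u_j\|_{L^p}$ on $(0,1)$. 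This makes the equivalence of norms precise and closes the argument; the remaining verifications are the routine bookkeeping I would not spell out in full.
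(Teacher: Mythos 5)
Your proposal is correct, but it takes a genuinely different route from the paper. The paper invokes Greiner's boundary-perturbation framework: it sets $A_0\coloneqq A_{p,max}|_{\ker L}$ (the Dirichlet realization, so $D(A_0)=W_0(G)$), picks $\lambda\in\rho(A_0)$, and uses Greiner's Lemma~1.2 to obtain the topological direct sum $D(A_{p,max})=D(A_0)\oplus\ker(\lambda-A_{p,max})$, with $L$ restricting to an isomorphism of $\ker(\lambda-A_{p,max})$ onto $\real^n$ whose inverse is the Dirichlet operator $D_\lambda$; continuity of the projection $D_\lambda L$ then yields the product decomposition. You instead build the complement by hand: an explicit finite-dimensional lifting $R$ with $LRq=q$, the splitting $u=(u-R(Lu))+R(Lu)$, and a direct verification that the graph norm of $A_{p,max}$ is equivalent to the edgewise $W^{2,p}$ product norm (via $c_j\geq c_0>0$, $(c_ju_j')'=c_ju_j''+c_j'u_j'$, and the interpolation inequality absorbing $\|u_j'\|_{L^p}$). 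What each approach buys: yours is elementary and self-contained, avoids citing Greiner, and does not need $\rho(A_0)\neq\emptyset$; it also makes explicit the norm-equivalence $D(A_0)\cong W_0(G)$ as Banach spaces, which the paper passes over with ``clearly''. The paper's argument is shorter and identifies the complement canonically as $\ker(\lambda-A_{p,max})$, which is the form in which this splitting is reused in the boundary-feedback literature. The only points worth tightening in your write-up are cosmetic: every endpoint of every edge is a vertex here, so the parenthetical about endpoints that are not vertices is vacuous, and you should state explicitly that $u_j-(Rq)_j$ vanishes at \emph{both} endpoints of each edge, which is exactly what places it in $W^{2,p}\cap W_0^{1,p}$ as required by the definition of $W_0(G)$.
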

\begin{proof}
We will use the setting of \cite{Gr87} for $A=A_{p,max}$, $X=E_p$ and the boundary operator $L:D(L) \subset E_p\to\real^n \eqqcolon Y$. Denote
\[A_0\coloneqq {A_{p,max}\mid}_{\ker L},\]
which is the operator \eqref{eq:opApmax} with Dirichlet boundary conditions. Hence, it is a generator on $E_p.$ Clearly
\begin{equation}\label{eq:A0W0}
D(A_0)=W_0(G)
\end{equation}
holds. 

We now choose $\la\in\rho(A_0)$. Using \cite[Lemma 1.2]{Gr87} we have that
\[D(A_{p,max})=D(A_0)\oplus \ker(\la-A_{p,max}).\]
Furthermore, the map
\begin{equation}\label{eq:LkerAiso}
L\colon \ker(\la-A_{p,max})\to\real^n
\end{equation}
is an onto isomorphism, having the inverse
\[D_{\la}\coloneqq (L\mid_{\ker(\la-A_{p,max})})^{-1}\colon \real^n\to \ker(\la-A_{p,max})\]
called \emph{Dirichlet-operator}, see \cite[(1.14)]{Gr87}. By \cite[(1.15)]{Gr87},
\[D_{\la} L\colon D(A_{p,max})\to \ker(\la-A_{p,max})\]
is the projection in $D(A_{p,max})$ onto $\ker(\la-A_{p,max})$ along $D(A_0)$. Since $D_{\la}L$ is continuous, by the properties of the direct sum, see e.g. \cite[Theorem 2.5]{RD18}, we obtain that
\[D(A_{p,max})\cong D(A_0)\times \ker(\la-A_{p,max})\]
holds. Now using \eqref{eq:A0W0} and that \eqref{eq:LkerAiso} is an isomorphism, the claim follows.
\end{proof}

\begin{lemma}\label{lem:Biso}
For the space $B$ defined in \eqref{eq:spB}
\[B\cong \left(C_0[0,1]\right)^m\times\real^n\]
holds.
\end{lemma}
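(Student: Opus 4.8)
The statement to prove is that $B = C(G) \cong (C_0[0,1])^m \times \mathbb{R}^n$. The natural strategy is to mimic the decomposition used in Lemma \ref{lem:ApmaxW0G}, but now at the level of continuous functions rather than $W^{2,p}$-functions. Recall $B = D(L)$ consists of all $u \in (C[0,1])^m$ satisfying the continuity condition at the vertices, and $L : D(L) \to \mathbb{R}^n$ records the common vertex values. The idea is: a continuous function on the graph is determined by its "boundary part" (the vector $Lu \in \mathbb{R}^n$ of vertex values) together with its "interior part" (what remains after subtracting off a canonical harmonic-type extension of the boundary data), and the interior part lives in $(C_0[0,1])^m$.

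\begin{proof}
We construct an explicit isomorphism. First, define an extension operator $R : \mathbb{R}^n \to B$ as follows: given $q = (q_1,\dots,q_n)^\top \in \mathbb{R}^n$, on each edge $\me_j$ with endpoints $\me_j(0) = \mv_i$ and $\me_j(1) = \mv_k$ let $(Rq)_j$ be the affine function $x \mapsto (1-x)q_i + x q_k$ (and if an endpoint of $\me_j$ is not a vertex of the graph in the relevant sense, the corresponding boundary value is $0$ by the convention fixed in Subsection \ref{subsec:systeq}). Then $Rq \in (C[0,1])^m$, it satisfies the continuity condition at every vertex by construction, so $Rq \in B$, and $L(Rq) = q$; moreover $R$ is bounded, since $\|Rq\|_{C(G)} \leq \max_i |q_i| = \|q\|_{\mathbb{R}^n}$ (using the $\ell^\infty$-norm on $\mathbb{R}^n$, which is equivalent to any other). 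Thus $R$ is a bounded right inverse of $L$, and $P \coloneqq RL : B \to B$ is a bounded projection onto $\mathrm{ran}(R)$ along $\ker L$. By the standard fact that a bounded projection splits the Banach space into a direct topological sum (cf. \cite[Theorem 2.5]{RD18}, already invoked in the proof of Lemma \ref{lem:ApmaxW0G}), we get
\[
B \cong \ker L \times \mathrm{ran}(R).
\]
Next, $\mathrm{ran}(R) \cong \mathbb{R}^n$ via $L$ restricted to $\mathrm{ran}(R)$: indeed $L \circ R = \mathrm{id}_{\mathbb{R}^n}$ shows $L|_{\mathrm{ran}(R)}$ is onto with bounded inverse $R$, and it is injective because $R$ is. Finally, $\ker L = \{u \in (C[0,1])^m : u_j(\mv_i) = 0 \text{ for all } i \text{ and all } j \in \Gamma(\mv_i)\}$; since every edge has each of its endpoints $0$ and $1$ mapped to some vertex (the graph is finite and connected, so no "dangling" parametrization endpoint sits outside the vertex set, and in any case the convention assigns value $0$ there), a function in $\ker L$ must vanish at both $0$ and $1$ on every edge, i.e. $\ker L = (C_0[0,1])^m$; conversely any element of $(C_0[0,1])^m$ trivially lies in $D(L)$ with zero vertex values. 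The norm on $\ker L$ inherited from $B$ is exactly the sup-norm on each component, matching the norm of $(C_0[0,1])^m$. Combining the three displayed identifications gives $B \cong (C_0[0,1])^m \times \mathbb{R}^n$.
\end{proof}

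The only point requiring any care is the identification $\ker L = (C_0[0,1])^m$, which hinges on correctly accounting for the boundary-value convention of Subsection \ref{subsec:systeq}: one must be sure that for every edge $\me_j$ both endpoints $x=0$ and $x=1$ are "seen" by $L$ (or assigned value $0$ by fiat), so that membership in $\ker L$ genuinely forces Dirichlet conditions at both ends of every edge. Once that bookkeeping is settled, everything else is the routine splitting-by-a-bounded-projection argument already used in the proof of Lemma \ref{lem:ApmaxW0G}, now with the affine extension operator $R$ playing the role that the Dirichlet operator $D_\lambda$ played there. I would also remark that an entirely parallel proof could be run with $R$ replaced by any bounded linear right inverse of $L$ (e.g. an $A_0$-harmonic extension), but the affine choice is the most elementary and makes the norm estimate immediate.
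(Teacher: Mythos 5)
Your proof is correct and follows essentially the same route as the paper: the paper also decomposes $B$ as $(C_0[0,1])^m\oplus B_1$, where $B_1$ is precisely your $\mathrm{ran}(R)$ (the edge-wise affine functions matching the vertex values), and then applies the same splitting result \cite[Theorem 2.5]{RD18} via the projection $L_1L=RL$. The only cosmetic difference is that you verify the boundedness of the right inverse $R$ by a direct norm estimate, whereas the paper obtains the boundedness of $(L|_{B_1})^{-1}$ from the open mapping theorem.
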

\begin{proof}
Let $u\in B$ arbitrary and $r\coloneqq Lu\in\real^n.$ We can define the unique $v^u\in B$ such that $v^u_j$ is a first order polynomial for each $j=1,\dots ,m$ taking values
\[v^u_j(\mv_i)=r_i,\quad \text{ for }\me_j\in\Gamma(\mv_i)\; j=1,\dots ,m,\; i=1,\dots ,n.\]
Then $Lv^u=r$ and 
\[u-v^u\in \left(C_0[0,1]\right)^m.\]
Denote
\[B_1\coloneqq\left\{v^u:u\in B\right\}\subset B\]
a closed subspace. Clearly,
\[\left(C_0[0,1]\right)^m\cap B_1=\{0_{B}\}\]
and if $u\in B$ then $u=(u-v^u)+v^u$ with $u-v^u\in \left(C_0[0,1]\right)^m$ and $v^u\in B_1$. Hence
\[B= \left(C_0[0,1]\right)^m\oplus B_1.\]
By the construction of $v^u$ follows that since $L:B\to\real^n$ is onto,
\[L\mid_{B_1}\colon B_1\to\real^n\] 
is a bijection. The operator $L\mid_{B_1}$ is also bounded for the norm of $B$ induced on $B_1$. Hence, by the open mapping theorem, it is an isomorphism. Denoting its inverse by
\[L_1\coloneqq \left(L\mid_{B_1}\right)^{-1}\colon \real^n\to B_1,\]
we obtain that
\[L_1L\colon B\to B_1\]
is the continuous projection from $B$ onto $B_1$ along $\left(C_0[0,1]\right)^m.$ Hence, we can use \cite[Theorem 2.5]{RD18} and obtain
\[B\cong \left(C_0[0,1]\right)^m\times\real^n.\]
\end{proof}

\begin{corollary}\label{cor:fractionalspaceincl}
Let $E_p^{\theta}$ defined in \eqref{eq:fractdomEp}. If $\theta> \frac{1}{2p}$ then the following continuous, dense embeddings are satisfied:
\begin{equation}\label{eq:Ethetaincl}
E_p^{\theta}\hookrightarrow B\hookrightarrow E_p.
\end{equation} 
\end{corollary}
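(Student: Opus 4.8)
The plan is to reduce everything to the classical one-dimensional Sobolev embedding $W^{2,p}(0,1)\hookrightarrow C[0,1]$, combined with the structural isomorphisms already established in Lemma \ref{lem:ApmaxW0G} and Lemma \ref{lem:Biso}. First I would identify $E_p^{\theta}$ up to equivalent norms. Since $A_p$ generates a bounded analytic semigroup (in fact contractive, by Proposition \ref{prop:sgrextend}), Remark \ref{rem:omega0} lets us take $\omega'=0$, so $E_p^{\theta}=D((-A_p)^\theta)$ with the graph norm. The key point is the interpolation identity $D((-A_p)^\theta)=[E_p,D(A_p)]_{\theta}$ (complex interpolation) or, with equivalent norm, a real interpolation space; for $\theta$ strictly between the relevant indices these coincide up to equivalence, and it suffices to work with the real interpolation space $(E_p,D(A_p))_{\theta,p}$, or simply to sandwich $D((-A_p)^\theta)$ between $D((-A_p)^{\theta'})$ for suitable $\theta'$. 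The crucial fact I want is the continuous embedding $D(A_p)\hookrightarrow \bigl(W^{2,p}(0,1;\mu_j\,dx)\bigr)^m$, which holds because $D(A_p)=D(A_{p,max})\cap\{MLu=Cu\}$ by Lemma \ref{lem:opAp}, and $D(A_{p,max})\cong W_0(G)\times\mathbb{R}^n$ by Lemma \ref{lem:ApmaxW0G}, so in particular $D(A_{p,max})$ carries a norm equivalent to the full $\bigl(W^{2,p}\bigr)^m$ norm.

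The heart of the argument is then an interpolation estimate: for $\theta>\frac{1}{2p}$ one has, for each edge $j$,
\[
[L^p(0,1),W^{2,p}(0,1)]_{\theta}\hookrightarrow C[0,1],
\]
because $2\theta>\frac1p$ puts the fractional Sobolev space $W^{2\theta,p}(0,1)$ (equivalently $H^{2\theta}_p$) above the Sobolev embedding threshold $s>\frac1p$ in one dimension. I would apply this componentwise. Concretely: given $u\in E_p^{\theta}$, approximate by elements of $D(A_p)\subset D(A_{p,max})$; on $D(A_{p,max})$ the graph norm controls the $\bigl(W^{2,p}\bigr)^m$ norm, and by the reiteration/interpolation inequality $\|u_j\|_{C[0,1]}\lesssim \|u_j\|_{L^p}^{1-\theta}\|u_j\|_{W^{2,p}}^{\theta}$ refined to an honest bound $\|u_j\|_{C[0,1]}\lesssim \|u_j\|_{[L^p,W^{2,p}]_\theta}\lesssim \|(-A_p)^\theta u\|_{E_p}$, we get $\|u\|_{B}\lesssim \|u\|_{E_p^\theta}$. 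The continuity condition across vertices is automatically preserved: every element of $D(A_p)$ lies in $D(L)=B$, and $D(A_p)$ is dense in $E_p^\theta$ (it is dense in each fractional domain space since the semigroup is strongly continuous), so the embedding $E_p^\theta\hookrightarrow B$ follows by density once the norm estimate is established on $D(A_p)$ — here one must check that the limit stays in the closed subspace $B=D(L)$ of $(C[0,1])^m$, which it does because $B$ is closed in the sup-norm and the estimate is in the $B$-norm. The second embedding $B\hookrightarrow E_p$ is elementary: $\|u\|_{E_p}^p=\sum_j\mu_j\|u_j\|_{L^p}^p\le \sum_j\mu_j\|u_j\|_{C[0,1]}^p\le \bigl(\sum_j\mu_j\bigr)\|u\|_B^p$, and density of $B=D(L)\supset D(A_p)$ in $E_p$ is immediate since $D(A_p)$ is a core.

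The main obstacle I anticipate is the bookkeeping around which interpolation functor to use and the precise equivalence $E_p^\theta=D((-A_p)^\theta)\cong[E_p,D(A_p)]_\theta$ together with the embedding $D(A_p)\hookrightarrow (W^{2,p})^m$ having the \emph{right} equivalent norm so that the interpolation inequality transfers cleanly — one has to be slightly careful because $D(A_p)$ is a proper closed subspace of $D(A_{p,max})$ cut out by the Kirchhoff condition $MLu=Cu$, and one wants that the $(W^{2,p})^m$-norm restricted to $D(A_p)$ is still equivalent to the $A_p$-graph norm; this follows since $D(A_p)$ is closed in $D(A_{p,max})$ and both the graph norm of $A_p$ and the restriction of the $D(A_{p,max})$-norm (equivalently the $(W^{2,p})^m$-norm by Lemma \ref{lem:ApmaxW0G}) are complete norms on it, so they are equivalent by the open mapping theorem. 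Once that is in place, the embedding into $C[0,1]$ per edge is standard, and a cleaner route avoiding explicit fractional Sobolev spaces is to pick $\theta_0$ with $\frac{1}{2p}<\theta_0<\theta$ and use the elementary fact $D((-A_p)^{\theta})\hookrightarrow D((-A_p)^{\theta_0})$ together with a single interpolation inequality of Gagliardo–Nirenberg type on the interval; density is then the only remaining subtlety and is handled as above.
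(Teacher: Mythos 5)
Your argument for the two continuity assertions is essentially the paper's own: identify $E_p^{\theta}$ with the complex interpolation space $[D(A_p),E_p]_{\theta}$, use $D(A_p)\hookrightarrow D(A_{p,max})$ together with Lemma \ref{lem:ApmaxW0G} to control the $(W^{2,p})^m$-norm, interpolate, and invoke the one-dimensional Sobolev embedding above the threshold $2\theta>\frac{1}{p}$. The only real divergence is in how membership in $B$ (rather than merely in $(C[0,1])^m$) is recovered: the paper carries the $\real^n$-factor of vertex values through the interpolation and lands in $(C_0[0,1])^m\times\real^n\cong B$ via Lemma \ref{lem:Biso}, while you land in $(C[0,1])^m$ and use that $B$ is sup-norm closed and that the dense subspace $D(A_p)$ of $E_p^{\theta}$ already lies in $B$. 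Both work; your remark that the graph norm of $A_p$ and the restriction of the $D(A_{p,max})$-norm are equivalent on $D(A_p)$ is correct (they in fact coincide, since $A_p$ is a restriction of $A_{p,max}$, so the open mapping theorem is not even needed there).

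There is, however, one genuine omission: the corollary asserts that the embeddings are \emph{dense}, and you only verify density of $B$ in $E_p$. The density of $E_p^{\theta}$ in $B$, that is, with respect to the sup-norm, is nowhere addressed, and it does not follow from anything you establish: all of your density statements are relative to the $E_p^{\theta}$- or $E_p$-norms, which are dominated by the $B$-norm, so they yield no sup-norm approximation. This is not a cosmetic point, since it is precisely the density required in Assumptions \ref{assum:mainvN}(2) for the abstract existence theorem. The paper obtains it from Proposition \ref{prop:mcAonC}: the part of $A_p$ in $B$ generates a strongly continuous semigroup on $B$, hence the domain of that generator, which is contained in $D(A_p)\subset E_p^{\theta}$, is dense in $B$; equivalently, $T_p(t)u\to u$ in the sup-norm as $t\downarrow 0$ with $T_p(t)u\in D(A_p)$ by analyticity. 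Some such input (or a direct Stone--Weierstrass-type argument producing sup-norm approximants that lie in $E_p^{\theta}$) must be added to complete your proof.
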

\begin{proof}
By Proposition \ref{prop:sgrextend} the operator $(A_p,D(A_p))$ generates a positive, contraction semigroup on $E_p$.
It follows from \cite[Theorem in $\mathsection$4.7.3]{Ar04} and \cite[Proposition in $\mathsection$4.4.10]{Ar04} that for the complex interpolation spaces
\begin{equation}\label{eq:inclpf2}
D((\omega'-A_p)^{\theta})\cong [D(\omega'-A_p),E_p]_{\theta}
\end{equation}
holds for any $\omega'>0$. Therefore,
\begin{equation}
E_p^{\theta}=D((\omega'-A_p)^{\theta})\cong [D(\omega'-A_p),E_p]_{\theta}\cong [D(A_p),E_p]_{\theta}.
\end{equation}
Defining $(A_{p,max},D(A_{p,max}))$ as in \eqref{eq:opApmax}, \eqref{eq:domApmax}
we have that
\[D(A_p)\hookrightarrow D(A_{p,max})\]
holds. Hence
\begin{equation}\label{eq:inclpf3}
E_p^{\theta}\hookrightarrow \left[D(-A_{p,max}),E_p\right]_{\theta}.
\end{equation}
By Lemma \ref{lem:ApmaxW0G},
\begin{equation}\label{eq:inclpf4}
D(-A_{p,max})\cong W_0(G)\times\real^n
\end{equation}
holds, where $W_0(G)$ is defined in \eqref{eq:W0G}. Since $E_p\cong E_p\times\{0_{\real^n}\}$, using general interpolation theory, see e.g. \cite[Section 4.3.3]{Triebel78}, we have that for $\theta>\frac{1}{2p}$
\[\left[W_0(G)\times\real^n,E_p\times\{0_{\real^n}\}\right]_{\theta}\hookrightarrow \left(\prod _{j=1}^m W_{0}^{2\theta,p}(0,1;\mu_j dx)\right)\times \real^n.\]
Thus, by \eqref{eq:inclpf3} and \eqref{eq:inclpf4}
\begin{equation}\label{eq:interpol}
E_p^{\theta} \hookrightarrow \left(\prod _{j=1}^m W_{0}^{2\theta,p}(0,1;\mu_j dx)\right)\times\real^n
\end{equation}
holds. Hence,
\begin{equation}\label{eq:inclpf5}
E_p^{\theta} \hookrightarrow \left(C_0[0,1]\right)^m\times\real^n
\end{equation}
is true.
Applying Lemma \ref{lem:Biso} we obtain that for $\theta>\frac{1}{2p}$
\begin{equation}\label{eq:inclpf6}
E_p^{\theta}\hookrightarrow B
\end{equation}
is satisfied. The continuity of the embedding $B\hookrightarrow E_p$ is clear. It follows from Proposition \ref{prop:mcAonC} that $D(A_p)$ is a dense subspace of $B$ and then so is $E_p^{\theta}$ for $\theta>\frac{1}{2p}$. Since $B\cong \left(C_0[0,1]\right)^m\times\real^n$ by Lemma \ref{lem:Biso} and $E_p\cong E_p\times\{0_{\real^n}\}$, the space $B$ is also dense in $E_p$ and the claim follows.
\end{proof}

In the following we will prove that the part of the operator $(A_p,D(A_p))$ in $B$ is the generator of a strongly continuous semigroup on $B.$

\begin{proposition}\label{prop:mcAonC}
The part of $(A_p,D(A_p))$ in $B$ generates a positive strongly continuous semigroup of contractions on $B$.
\end{proposition}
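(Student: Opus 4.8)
The statement is that the part of $(A_p, D(A_p))$ in $B = C(G)$ generates a positive, strongly continuous contraction semigroup on $B$. The natural route is to combine the analytic-semigroup machinery already developed on the scale $E_p$ with a consistency argument showing that the semigroups $(T_p(t))_{t\ge 0}$ leave $B$ invariant and act as a $C_0$-contraction semigroup there. Concretely, I would first invoke Proposition \ref{prop:sgrextend}: the semigroups $(T_p(t))_{t\ge 0}$ are compatible (they agree on $E_p \cap E_q$), positive, contractive on every $E_p$ with $1 \le p \le \infty$, and in particular on $E_\infty$. The key point is that $B$ sits between $E_\infty$ and, say, $E_2$, and the $E_\infty$-contractivity together with positivity of the semigroup is exactly what is needed to control the sup-norm.

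The main step is to prove that $T_p(t) B \subseteq B$ and that $(T_p(t)|_B)_{t\ge 0}$ is strongly continuous on $B$. For invariance, I would argue that for $u \in B \hookrightarrow E_\infty$, the function $T_\infty(t)u$ is already in $E_\infty$ and bounded by $\|u\|_{E_\infty}$ by contractivity; what must be shown is that it actually lies in $D(L)$, i.e. is continuous across the vertices. This follows from the regularizing property of the analytic semigroup on $E_p$ for any finite $p > 1$: for $t > 0$, $T_p(t)u \in D(A_p) \subseteq D(A_{p,\max}) \hookrightarrow \bigl(W^{2,p}(0,1)\bigr)^m \cap D(L)$ by Lemma \ref{lem:opAp}, and by Sobolev embedding $W^{2,p} \hookrightarrow C[0,1]$ for $p$ large, so $T_p(t)u \in B$ for $t > 0$; consistency of the semigroups on the different $E_p$ then identifies $T_\infty(t)u$ with this element. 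For $t = 0$ there is nothing to prove. Strong continuity on $B$: for $u$ in the dense subspace $D(A_p) \cap B$ (dense in $B$ by Corollary \ref{cor:fractionalspaceincl} via $E_p^\theta \hookrightarrow B$ with $\theta > \tfrac{1}{2p}$, once $\theta < 1$ is also arranged), one has $\|T_p(t)u - u\|_{E_\infty} \le t \sup_{s\le t}\|A_p T_p(s) u\|_{E_\infty}$ — but this requires $A_p u \in E_\infty$, which holds since $A_p u \in E_p$ and $u \in D(A_p)$ gives $A_{p,\max}u = (c_j u_j')' - p_j u_j$ with $u_j \in W^{2,p}$, hence $A_p u \in \bigl(L^\infty(0,1)\bigr)^m = E_\infty$ when $p$ is large enough that $W^{2,p}\hookrightarrow W^{1,\infty}$ and $c_j \in C^1$. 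Combining the uniform bound $\|T_p(t)|_B\| \le 1$ with density of a core gives $C_0$-continuity on all of $B$. Positivity on $B$ is inherited directly from positivity of $(T_p(t))_{t\ge 0}$ on $E_p$ (Proposition \ref{prop:sgrextend}), since the order on $B$ is the restriction of the order on $E_p$.

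Finally, I would identify the generator: the generator of $(T_p(t)|_B)_{t\ge 0}$ is by definition the part of $A_p$ in $B$, namely the operator $u \mapsto A_p u$ with domain $\{u \in D(A_p) \cap B : A_p u \in B\}$. (That this is nonempty and dense is again Corollary \ref{cor:fractionalspaceincl}.) So once invariance, contractivity, strong continuity and positivity are established, the conclusion is immediate from the general fact that a $C_0$-semigroup obtained by restriction of another $C_0$-semigroup to an invariant closed subspace has as generator the part of the original generator in that subspace.

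\textbf{Main obstacle.} The delicate point is the strong continuity of $(T_p(t)|_B)_{t\ge 0}$ at $t = 0$ in the $\sup$-norm: the $E_\infty$-contraction estimate gives uniform boundedness for free, but one cannot transfer $C_0$-continuity from $E_p$ (where it holds in the $L^p$-norm) to $B$ (the $\sup$-norm) without an intermediate regularity argument. The clean way is to exhibit a subspace dense in $B$ on which $t \mapsto T_p(t)u$ is norm-continuous into $B$ — e.g. $D(A_p)$, using that for such $u$ both $u$ and $A_p u$ lie in $E_\infty$ (so the fundamental-theorem-of-calculus bound $\|T(t)u - u\| \le \int_0^t \|T(s)A_p u\|\,ds$ makes sense in the $\sup$-norm) — and then to use the uniform contractivity to pass to the closure. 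One must be slightly careful that $D(A_p)$ really is contained in $B$ and dense there; the containment needs $p$ large enough for $W^{2,p}(0,1) \hookrightarrow C[0,1]$ (true for every $p > 1$ in one space dimension), and the density is Corollary \ref{cor:fractionalspaceincl}. A secondary bookkeeping issue is the consistency of the various $T_p$ so that "$T_\infty(t)u$ for $u \in B$" is unambiguous — but this is already part of the statement of Proposition \ref{prop:sgrextend}.
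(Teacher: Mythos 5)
Your overall architecture (invariance via analytic smoothing into $D(A_p)\subset B$, uniform boundedness from $E_\infty$-contractivity, strong continuity on a dense core, positivity inherited from $E_p$) matches the paper's. However, the core of your strong-continuity argument contains a genuine error. You claim that for $u\in D(A_p)$ one has $A_pu\in E_\infty$ because $W^{2,p}\hookrightarrow W^{1,\infty}$. This is false: $(c_ju_j')'=c_j'u_j'+c_ju_j''$, and while $u_j'$ is bounded, $u_j''$ is only in $L^p$, so $A_pu$ is in general an unbounded function (take $u_j''\sim x^{-1/(2p)}$ near an interior point and correct the boundary behaviour). Consequently the fundamental-theorem-of-calculus bound $\|T_p(t)u-u\|_{E_\infty}\le t\sup_{s\le t}\|A_pT_p(s)u\|_{E_\infty}$ is not available on $D(A_p)$, and your chosen core does not do the job. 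One could try to replace $D(A_p)$ by $D(A_\infty)$, where $A_\infty u\in E_\infty$ by definition, but then the density of $D(A_\infty)$ in $B$ becomes the nontrivial point, since $D(A_\infty)$ carries the Kirchhoff condition $MLu=Cu$ in addition to continuity across vertices.

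There is a second, structural problem: you invoke Corollary \ref{cor:fractionalspaceincl} for the density of $D(A_p)$ (equivalently $E_p^\theta$) in $B$. In the paper, the density assertion of that corollary is itself deduced from Proposition \ref{prop:mcAonC} (density of the domain of the generator of the restricted $C_0$-semigroup), so using it here is circular; only the continuity of the embeddings is established independently. The paper avoids both difficulties with a different choice of core: it takes $p=2$ and $D=D((-A_2)^{1/2})=V=(H^1(0,1))^m\cap D(L)$, the form domain. Density of $V$ in $B$ is elementary because $V$ carries no Kirchhoff condition (it is natural for the form), so $(C^\infty[0,1])^m\cap B\subset V$ and Stone--Weierstrass applies; and for $u\in V$ the sup-norm convergence $T_2(t)u\to u$ follows from the Sobolev embedding $H^1\hookrightarrow C$ together with $E_2$-convergence of $T_2(t)(-A_2)^{1/2}u\to(-A_2)^{1/2}u$, using that $V=D((-A_2)^{1/2})$ with equivalent norms. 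Consistency of the semigroups then transfers the conclusion from $p=2$ to all $p$. You would need to import this (or an equivalent independent density-plus-regularity argument) to close the gap.
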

\begin{proof}
1. We first prove that the semigroup $(T_p(t))_{t\geq 0}$ leaves $B$ invariant. We take $u\in B\subset E_p$ and use that $(T_p(t))_{t\geq 0}$ is analytic on $E_p$ (see Proposition \ref{prop:sgrextend}). . Hence, $T_p(t)u\in D(A_p).$ The explicit form \eqref{eq:domAp} of $D(A_p)$ shows that $D(A_p)\subset B$ and hence also
\[T_p(t)u\in B\] holds.

2. In the next step we prove that $(T_p(t)|_{B})_{t\geq 0}$ is a strongly continuous semigroup. By \cite[Proposition I.5.3]{EN00}, it is enough to prove that there exist $K>0$ and $\delta>0$ and a dense subspace $D\subset B$ such that 
\begin{enumerate}[(a)]
	\item $\|T_p(t)\|_{B}\leq K$ for all $t\in[0,\delta]$, and
	\item $\lim_{t\downarrow 0}T_p (t)u=u$ for all $u\in D$.
\end{enumerate}
To verify (a), we obtain by Proposition \ref{prop:sgrextend} that for $u\in B$
\[\|T_p(t)u\|_{B}= \|T_p(t)u\|_{E_{\infty}}= \|T_{\infty}(t)u\|_{E_{\infty}}\leq \|u\|_{E_{\infty}}= \|u\|_{B},\]
hence
\[\|T_p(t)\|_{B}\leq 1=:K,\quad t\geq 0.\]
To prove (b) we first set $p=2$. Taking $\omega>0$ arbitrary, we obtain that the form 
\[\ea_{\omega}(u,v)\coloneqq \ea(u,v)+\omega \cdot\langle u,v\rangle_{E_2},\quad u,v\in V\]
is coercive, symmetric and continuous, see \cite[Remark 7.3.3]{Haase06} and Proposition \ref{prop:eaprop}. For the form-domain $V$ defined in \eqref{eq:domform}, equipped with the usual $(H^1(0,1))^m$-norm, we have that
\[V=D((\omega-A_2)^{\sfrac{1}{2}})\]
holds with equivalence of norms (see e.g.~\cite[Proposition 5.5.1]{Ar04}). We also have
\begin{equation}\label{eq:isom}
V=D((\omega-A_2)^{\sfrac{1}{2}})=D((-A_2)^{\sfrac{1}{2}})
\end{equation}
with equivalent norms, where we used  \cite[Proposition 3.1.7]{Haase06} for the second equality and norm equivalence. Notice that the subspace $(C^{\infty}[0,1])^m\cap B$ (the infinitely many times differentiable functions on the edges that are continuous across the vertices) is contained in $V$ and is dense in $B$ by the Stone--Weierstrass theorem. Hence, $V$ and thus $D((-A_2)^{\sfrac{1}{2}})$ is dense in $B$. Defining $D\coloneqq D((-A_2)^{\sfrac{1}{2}})$, for $u\in D$ there exist $C_1, C_2>0$ such that
\begin{align}
\|T_2 (t)u-u\|_{B}&\leq C_1\cdot \|T_2 (t)u-u\|_{\left(H^1(0,1)\right)^m} \\
&\leq C_2\cdot \left(\|T_2 (t)(-A_2)^{\sfrac{1}{2}}u-(-A_2)^{\sfrac{1}{2}}u\|_{E_{2}}+\|T_2 (t) u-u\|_{E_{2}}\right)\to 0,\quad t\downarrow 0.
\end{align}
In the first inequality we have used Sobolev embedding and in the second one the norm equivalence in \eqref{eq:isom} and the the fact that $T_2 (t)$ and $(-A_2)^{\sfrac{1}{2}}$ commute on $D((-A_2)^{\sfrac{1}{2}})$. Summarizing 1. and 2., and using that clearly $B$ is continuously embedded in $E_p$, we can apply \cite[Proposition in Section II.2.3]{EN00} for $(A_2,D(A_2))$ and $Y=B$, and obtain that the part of $(A_2,D(A_2))$ in $B$ generates a positive strongly continuous semigroup of contractions on $B$. Since the semigroups in Proposition \ref{prop:sgrextend} are consistent, the same is true for $(T_p(t))_{t\geq 0}$ for any $p\in [1,\infty].$
\end{proof}

\begin{corollary}\label{prop:determcont}
The first order problem \eqref{netcp} is well-posed on $B$, i.e., for all initial data $\mathsf{u}\in B$ the problem~\eqref{netcp} admits a unique mild solution that continuously depends on the initial data.
\end{corollary}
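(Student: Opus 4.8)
The plan is to deduce the statement directly from Proposition \ref{prop:mcAonC} via the standard correspondence between generators of strongly continuous semigroups and well-posed abstract Cauchy problems. First I would let $(A_B,D(A_B))$ denote the part of $(A_p,D(A_p))$ in $B$. By Proposition \ref{prop:mcAonC} this operator generates a (positive, contractive) strongly continuous semigroup $(T_B(t))_{t\ge 0}$ on $B=C(G)$. By the classical equivalence between generation and well-posedness (see e.g.~\cite[Proposition II.6.2]{EN00}), the abstract Cauchy problem
\[
\dot u(t)=A_B u(t),\quad t>0,\qquad u(0)=\mathsf{u}
\]
is well-posed on $B$: for every $\mathsf{u}\in B$ there is a unique mild solution, namely the orbit $u(t)=T_B(t)\mathsf{u}$, and the solution map $\mathsf{u}\mapsto u(\cdot)$ is continuous from $B$ into $C([0,\infty);B)$.

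It then remains to identify this abstract problem with \eqref{netcp}. Here I would invoke the explicit description \eqref{eq:domAp} of $D(A_p)$: a function $u$ lies in $D(A_p)$ exactly when each component lies in $W^{2,p}(0,1;\mu_j\,dx)$, the continuity condition $(\ref{netcp}b)$ at the vertices holds (since $D(A_p)\subset D(L)$), and the Kirchhoff-type condition $MLu=Cu$, i.e.\ $(\ref{netcp}c)$, is satisfied; moreover $A_p$ acts as the differential expression on the right-hand side of $(\ref{netcp}a)$. Passing to the part $A_B$ in $B$ merely restricts attention to those $u\in D(A_p)$ with $A_pu\in B$, which leaves the boundary conditions unchanged. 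Consequently a mild solution of the abstract problem on $B$ is, by definition, precisely a mild solution of \eqref{netcp} with initial datum $\mathsf{u}\in B$, and the continuous dependence on $\mathsf{u}$ is exactly the continuity of $\mathsf{u}\mapsto T_B(\cdot)\mathsf{u}$.

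I expect no genuinely hard step: all the substance is already packaged in Proposition \ref{prop:mcAonC}, and what remains is bookkeeping. The only point worth spelling out is \emph{consistency}: the mild solution produced in $B$ coincides with the mild solution in $E_p$ furnished by Theorem \ref{theo:determLp}, because the semigroups $(T_p(t))_{t\ge 0}$ and $(T_B(t))_{t\ge 0}$ agree on $B$ (as already observed in the proof of Proposition \ref{prop:mcAonC}). This ensures that well-posedness on $B$ genuinely strengthens Theorem \ref{theo:determLp} rather than introducing a different notion of solution.
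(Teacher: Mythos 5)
Your argument is correct and is exactly the route the paper intends: the corollary is stated without proof precisely because it follows immediately from Proposition \ref{prop:mcAonC} via the standard equivalence between generation of a strongly continuous semigroup and well-posedness of the associated abstract Cauchy problem, together with the identification of $D(A_p)$ with the boundary conditions of \eqref{netcp}. Your additional remark on consistency with the $E_p$-solutions of Theorem \ref{theo:determLp} is a sensible (if optional) piece of bookkeeping that the paper leaves implicit.
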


\subsection{Main results}\label{subsec:main}

In this subsection we first apply the above results to the following stochastic evolution equation, based on \eqref{netcp}. This corresponds to a slightly more general version of \eqref{eq:stochnet}, see \eqref{eq:stochneteq} later.

Let $(\Omega,\mathscr{F},\mathbb{P})$ be a complete probability space endowed with a right-continuous filtration $\mathbb{F}=(\mathscr{F}_t)_{t\in [0,T]}$ for some $T>0$ given. We consider the problem
\begin{equation}\label{eq:stochsys}
\left\{\begin{aligned}
\dot{u}_j(t,x)& = (c_j u_j')'(t,x)-p_j(x)u_j(t,x)&&\\
&\quad + f_j(t,x,u_j(t,x))&&\\
&\quad +g_j(t,x,u_j(t,x))\dfrac{\partial w_j}{\partial t}(t,x), &&t\in(0,T],\; x\in(0,1),\; j=1,\dots,m, && (a)\\
u_j(t,\mv _i)& =u_\ell (t,\mv _i)\eqqcolon q_i(t), &&t\in(0,T],\; \forall j,\ell\in \Gamma(\mv _i),\; i=1,\ldots,n,&& (b)\\
\left[M q(t)\right]_{i} & = -\sum\nolimits_{j=1}^m \phi_{ij}\mu_{j} c_j(\mv_i) u'_j(t,\mv_i), &&t\in(0,T],\; i=1,\ldots,n,&& (c)\\
u_j(0,x)& =\mathsf{u}_{j}(x), &&x\in [0,1],\; j=1,\dots,m, && (d)
\end{aligned}
\right.
\end{equation}
where  $\frac{\partial w_j}{\partial t}$, $j=1,\dots ,m$, are independent space-time white noises on $[0,1]$; written as formal derivatives of independent cylindrical Wiener-processes $(w_j(t))_{t\in [0,T]}$, defined on $(\Omega,\mathscr{F},\mathbb{P})$, in the Hilbert space $L^2(0,1; \mu_j dx)$  with respect to the filtration $\mathbb{F}$.  

 The functions $f_j\colon[0,T]\times \Omega\times [0,1]\times \real\to\real$ are polynomials of the form
\begin{equation}\label{eq:fjdef}
f_j(t,\omega,x,\eta)=-a_{j,2k+1}(t,\omega,x)\eta^{2k+1}+\sum_{l=0}^{2k}a_{j,l}(t,\omega,x)\eta^l,\quad \eta\in\real,\, j=1,\dots ,m
\end{equation} 
for some fixed integer $k$. For the coefficients we assume that there are constants $0<c\leq C<\infty$ such that
\begin{equation}\label{eq:assa_j}
c\leq a_{j,2k+1}(t,\omega,x)\leq C,\;\left|a_{j,l}(t,\omega,x)\right|\leq C,\text{ for all }j=1,\dots ,m,\;l=0,2,\dots ,2k,
\end{equation}
for all $x\in [0,1]$,  $t\in[0,T]$ and almost all $\omega\in\Omega$, see \cite[Example 4.2]{KvN12}. The coefficients $a_{j,l}\colon [0,T]\times \Omega\times [0,1]\to\real$ are jointly measurable and adapted in the sense that for each $j$ and $l$ and for each $t\in[0,T]$, the function $a_{j,l}(t,\cdot)$ is $\mathscr{F}_t\otimes \mcB_{[0,1]}$-measurable, where $\mcB_{[0,1]}$ denotes the sigma-algebra of the Borel sets on $[0,1].$

We further assume a technical assumption regarding the graph structure that will play and important role in our setting.
\begin{asum}\label{asum:F}
For the coefficients in \eqref{eq:fjdef} we assume that
\[\left(a_{1,l}(t,\omega,\cdot),\dots ,a_{m,l}(t,\omega,\cdot)\right)^{\top}\in B\text{ for all }l=1,\dots ,2k+1,\]
$t\in[0,T]$ and almost all $\omega\in\Omega$.
\end{asum}

\begin{remark}\label{rem:exfj}
If the coefficients in \eqref{eq:fjdef} do not depend on $j$ -- that is, they are the same on different edges --, and satisfy
\[a_{l}(t,\omega,\cdot)=a_{j,l}(t,\omega,\cdot)\in C[0,1],\quad t\in[0,T],\omega\in\Omega,\quad j=1,\dots m,\; l=1,\dots ,2k+1\]
and
\[a_{l}(t,\omega,0)=a_l(t,\omega,1),\text{ for all }l=1,\dots 2k+1,\]
then Assumption \ref{asum:F} is fulfilled. This is the case e.g. if $a_l's$ are constant (not depending on $x$).
\end{remark}

For the functions $g_j$ we assume
\begin{align}\label{eq:gidef}
&g_j\colon [0,T]\times \Omega\times [0,1]\times\real\to \real,\quad j=1,\dots ,m \text{ are locally Lipschitz continuous}\notag\\
&\text{and of linear growth in the fourth variable,}\notag\\
&\text{uniformly with respect to the first three variables.}
\end{align}
We further assume that the functions are jointly measurable and adapted in the sense that for each $j$ and $t\in[0,T]$, $g_j(t,\cdot)$ is $\mathscr{F}_t\otimes \mcB_{[0,1]}\otimes \mcB_{\real}$-measurable, where $\mcB_{[0,1]}$ and $\mcB_{\real}$ denote the sigma-algebras of the Borel sets on $[0,1]$ and $\real$, respectively.

The above assumptions on the coefficients on the edges, except for Assumption \ref{asum:F} which is specific for the graph setting, are analogous to those in  \cite[Section 5]{KvN12} and \cite[Section 5]{KvN19}.
\medskip

To handle system \eqref{eq:stochsys}, we rewrite it in the form of the abstract stochastic Cauchy-problem \eqref{eq:SCP}. To do so, we specify the functions appearing in \eqref{eq:SCP} corresponding to \eqref{eq:stochsys}.

The operator $(A, D(A))=(A_p, D(A_p))$ will be the generator of the strongly continuous analytic semigroup $S\coloneqq (T_p(t))_{t\geq 0}$ on the Banach space $E\coloneqq E_p$ for some large $p\geq 2$, see Proposition \ref{prop:sgrextend} and Lemma \ref{lem:opAp}. Hence, $E$ is a UMD space of type $2$. 

For the function $F\colon [0,T]\times \Omega\times B\to B$ we have
\begin{equation}\label{eq:Fdef}
F(t,\omega,u)(s)\coloneqq  \left(f_1(t,\omega,s,u_1(s)),\dots ,f_m(t,\omega,s,u_m(s))\right)^{\top}, \quad s\in[0,1].
\end{equation}
Since $B$ is an algebra, Assumption \ref{asum:F} assures that $F$ maps $[0,T]\times \Omega\times B$ into $B.$

To define the operator $G$ we argue in analogy with \cite[Section 5]{KvN19}. First define 
\[
H\coloneqq E_2
\]
the product $L^2$-space, see \eqref{eq:E2}, which is a Hilbert space. We further define the multiplication operator $\Gamma\colon [0,T]\times B\to\mathcal{L}(H)$ as
\begin{equation}\label{eq:Gammadef}
\left[\Gamma(t,u)h\right](s)\coloneqq\begin{pmatrix}
	 g_1(t,s,u_1(s)) & \hdots & 0\\
	\vdots & \ddots & \vdots\\
	 0 & \hdots  &g_m(t,s,u_m(s))
	\end{pmatrix}\cdot\begin{pmatrix}
	 h_1(s)\\
	\vdots\\
	h_m(s)
	\end{pmatrix},\quad s\in(0,1),
\end{equation}
for $u\in B$, $h\in H$. Because of the assumptions \eqref{eq:gidef} on the functions $g_j$, $\Gamma$ clearly maps into $\mathcal{L}(H).$ 

Let $(A_2,D(A_2))$ be the generator on $H=E_2,$ see Proposition \ref{prop:determL2}, and pick $\kappa_G\in(\frac{1}{4},\frac{1}{2})$. 
By \eqref{eq:interpol} in the proof of Corollary \ref{cor:fractionalspaceincl} we have that there exists a continuous embedding
\begin{equation}\label{eq:imath}
\imath\colon E_2^{\kappa_G} \to \left(\prod _{j=1}^m H_{0}^{2\kappa_G}(0,1;\mu_j dx)\right)\times\real^n\eqqcolon \mathcal{H},
\end{equation}
where $\mathcal{H}$ is a Hilbert space. Applying the steps \eqref{eq:inclpf5} and \eqref{eq:inclpf6} of Corollary \ref{cor:fractionalspaceincl} we obtain that $\mathcal{H}\hookrightarrow B$ holds, and by \eqref{eq:Ethetaincl}, there exists a continuous embedding
\begin{equation}\label{eq:jmath}
\jmath\colon \mathcal{H}\to E_p
\end{equation}
for $p\geq 2$ arbitrary.

Let $\nu>0$ arbitrary and define now $G$ by
\begin{equation}\label{eq:Gdef}
(\nu-A_p)^{-\kappa_G}G(t,u)h\coloneqq \jmath\, \imath\,   (\nu-A_2)^{-\kappa_G}\Gamma(t,u)h,\quad u\in B,\; h\in H.
\end{equation}
\begin{proposition}\label{prop:Gprop}
Let $p\geq 2$ and $\kappa_G\in(\frac{1}{4},\frac{1}{2})$ be arbitrary. Then the operator $G$ defined in \eqref{eq:Gdef} maps $[0,T]\times B$ into $\gamma (H,E_p^{-\kappa_{G}})$.
\end{proposition}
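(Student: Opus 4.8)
The plan is to show that for fixed $(t,u)$ the operator $G(t,u)$, defined implicitly through \eqref{eq:Gdef}, is a $\gamma$-radonifying operator from $H$ into $E_p^{-\kappa_G}$, by factoring it through the Hilbert space $\mathcal{H}$ and exploiting the fact that $\Gamma(t,u)$ is a bounded multiplication operator on $H=E_2$. Rewriting \eqref{eq:Gdef}, the operator $G(t,u)$ equals, up to the isomorphism $(\nu-A_p)^{\kappa_G}\colon E_p\to E_p^{-\kappa_G}$, the composition
\[
H \xrightarrow{\ \Gamma(t,u)\ } H \xrightarrow{\ (\nu-A_2)^{-\kappa_G}\ } E_2^{\kappa_G} \xrightarrow{\ \imath\ } \mathcal{H} \xrightarrow{\ \jmath\ } E_p .
\]
So the first step is to observe that the only potentially non-bounded constituent is $(\nu-A_2)^{-\kappa_G}$ viewed as a map $H\to E_2^{\kappa_G}$ — but this is in fact an \emph{isomorphism} by the very definition \eqref{eq:fractdom} of the fractional domain space, hence bounded. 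Consequently $G(t,u)$ is a composition of bounded operators and is therefore a bounded operator from $H$ to $E_p^{-\kappa_G}$; the real content is the $\gamma$-radonifying (not merely bounded) property.

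The key point is that $\gamma$-radonifying operators form a two-sided operator ideal: if $R\in\gamma(H_1,H_2)$ and $T\in\mathcal{L}(H_2,X)$, $S\in\mathcal{L}(X_0,H_1)$ are bounded (into/out of Banach spaces), then $TRS$ is again $\gamma$-radonifying. Moreover, every bounded operator between Hilbert spaces that is Hilbert–Schmidt is $\gamma$-radonifying, and on a Hilbert space the $\gamma$-radonifying operators are \emph{exactly} the Hilbert–Schmidt ones. So the plan is: (i) factor $G(t,u)$ as above through $\mathcal{H}$, which is a Hilbert space by \eqref{eq:imath}; (ii) identify a Hilbert–Schmidt operator $H\to\mathcal{H}$ inside this factorization; (iii) compose with the bounded map $\jmath\colon\mathcal{H}\to E_p$ and with $(\nu-A_p)^{\kappa_G}$ to land in $E_p^{-\kappa_G}$, using the ideal property. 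For step (ii), the natural candidate is $\imath\,(\nu-A_2)^{-\kappa_G}\Gamma(t,u)\colon H\to\mathcal H$. Since $\Gamma(t,u)$ is bounded on $H$ by \eqref{eq:gidef}, it suffices to check that $\imath\,(\nu-A_2)^{-\kappa_G}\colon H\to\mathcal{H}$ is Hilbert–Schmidt; equivalently, since $\imath$ is bounded, that $(\nu-A_2)^{-\kappa_G}\colon E_2\to E_2^{\kappa_G}$ composed with the embedding $E_2^{\kappa_G}\hookrightarrow\mathcal H\hookrightarrow\bigl(\prod_j H^{2\kappa_G}_0(0,1;\mu_j dx)\bigr)\times\mathbb R^n$ is Hilbert–Schmidt.

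To verify this, I would reduce to the classical scalar fact underlying \cite[Example 4.2]{KvN12} and \cite[Section 5]{KvN19}: on each edge, $(\nu - A_{0,j})^{-\kappa_G}$ (the Dirichlet realization of the one-dimensional operator) maps $L^2(0,1;\mu_j\,dx)$ into $H^{2\kappa_G}_0(0,1;\mu_j\,dx)$ and, because $2\kappa_G>\frac12$, this embedding is of trace class / Hilbert–Schmidt type in the relevant range; more precisely, the eigenvalues $\lambda_{j,k}$ of $A_{0,j}$ grow like $k^2$, so $\sum_k \lambda_{j,k}^{-2\kappa_G}\cdot\lambda_{j,k}^{\text{(Sobolev weight)}}<\infty$ precisely when $\kappa_G>\frac14$, which is exactly the hypothesis $\kappa_G\in(\frac14,\frac12)$. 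The finite-dimensional factor $\mathbb R^n$ is harmless. I would write this out by diagonalizing $A_2$ (or $A_{p,max}$ with Dirichlet conditions) via Lemma \ref{lem:ApmaxW0G}, using $D(A_{p,max})\cong W_0(G)\times\mathbb R^n$, and estimating the Hilbert–Schmidt norm of $(\nu-A_2)^{-\kappa_G}$ regarded as a map into the $2\kappa_G$-Sobolev-scale space by an eigenvalue sum. The main obstacle is this Hilbert–Schmidt estimate — making precise that the chosen range $\kappa_G>\frac14$ is exactly what is needed for summability of the eigenvalue series in one space dimension, and carrying the Dirichlet-Sobolev identification through the isomorphism of Lemma \ref{lem:ApmaxW0G} cleanly; everything else (boundedness of $\Gamma(t,u)$, of $\imath$, of $\jmath$, and the ideal property of $\gamma$) is routine and essentially already recorded in the preparatory results above.
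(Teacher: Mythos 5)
Your overall factorization of $G(t,u)$ through $H\xrightarrow{\Gamma(t,u)}H\xrightarrow{(\nu-A_2)^{-\kappa_G}}E_2^{\kappa_G}\xrightarrow{\imath}\mathcal H\xrightarrow{\jmath}E_p$ and your appeal to the ideal property of $\gamma$-radonifying operators are correct, but you have located the radonifying factor in the wrong place, and the step where you locate it would fail. You propose to show that $\imath\,(\nu-A_2)^{-\kappa_G}\colon H\to\mathcal H$ is Hilbert--Schmidt. It is not, and cannot be: $(\nu-A_2)^{-\kappa_G}$ is by definition an isometric isomorphism from $E_2$ onto $E_2^{\kappa_G}$ (equipped with the norm $\|(\nu-A_2)^{\kappa_G}\cdot\|_{E_2}$), and $\imath$ embeds $E_2^{\kappa_G}$ into $\mathcal H=\bigl(\prod_j H_0^{2\kappa_G}(0,1;\mu_j dx)\bigr)\times\real^n$, a space of the \emph{same} Sobolev smoothness $2\kappa_G$. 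Concretely, if $(e_k)$ is an orthonormal basis of eigenfunctions of $A_2$ with eigenvalues $-\lambda_k$, $\lambda_k\sim k^2$, then the singular value of $(\nu-A_2)^{-\kappa_G}$ on $e_k$ is $(\nu+\lambda_k)^{-\kappa_G}$, but the $H^{2\kappa_G}$-norm of $e_k$ is of order $(\nu+\lambda_k)^{\kappa_G}$; the two cancel, so the images $\imath(\nu-A_2)^{-\kappa_G}e_k$ have $\mathcal H$-norms bounded away from zero and the operator is not even compact. Your own eigenvalue sum, written out, is $\sum_k\lambda_k^{-2\kappa_G}\cdot\lambda_k^{2\kappa_G}=\sum_k 1=\infty$; the condition $\kappa_G>\frac14$ does not rescue it. (The sum $\sum_k\lambda_k^{-2\kappa_G}<\infty$ for $\kappa_G>\frac14$ shows that $(\nu-A_2)^{-\kappa_G}$ is Hilbert--Schmidt as a map $E_2\to E_2$, but that is a different operator and does not feed into the rest of your composition, since the range with the $E_2$-norm does not map boundedly into $E_p$ for $p>2$.)

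The radonifying property must instead be carried by the last factor $\jmath\colon\mathcal H\to E_p$, which is what the paper does: by \cite[Lemma 2.1(4), Corollary 2.2]{vNVW08}, the Sobolev embedding $H_0^{2\kappa_G}(0,1)\hookrightarrow L^p(0,1)$ is $\gamma$-radonifying precisely because $2\kappa_G>\tfrac12=\tfrac{d}{2}$ in dimension $d=1$ --- this is where the hypothesis $\kappa_G>\tfrac14$ enters (the finite-dimensional factor $\real^n$ being harmless, as you say). With $\jmath\in\gamma(\mathcal H,E_p)$ established, your remaining steps (boundedness of $\Gamma(t,u)$ via \eqref{eq:gidef}, of $(\nu-A_2)^{-\kappa_G}\colon H\to E_2^{\kappa_G}$ and of $\imath$, the isomorphism $(\nu-A_p)^{\kappa_G}\colon E_p\to E_p^{-\kappa_G}$, and the two-sided ideal property) go through verbatim and complete the proof. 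So the repair is local --- move the $\gamma$-radonifying burden from the middle of your chain to its end --- but as written the central claim of your step (ii) is false.
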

\begin{proof}
We can argue as in \cite[Section 10.2]{vNVW08}. Using \cite[Lemma 2.1(4)]{vNVW08}, we obtain in a similar way as in \cite[Corollary 2.2]{vNVW08}) that $\jmath \in\gamma (\mathcal{H},E_p)$, since $2\kappa_G>\frac{1}{2}$ holds. Hence, by the definition of $G$ and the ideal property of $\gamma$-radonifying operators, the mapping $G$ takes values in $\gamma (H,E_p^{-\kappa_{G}})$.
\end{proof}

The driving noise process $W_H$ is defined by
\begin{equation}\label{eq:Wdef}
W_H(t)=\begin{pmatrix}
	w_1(t)\\
	\vdots\\
	w_m(t)
\end{pmatrix}, \,t\in [0,T],
\end{equation}
and thus $(W_H(t))_{t\in [0,T]}$ is a cylindrical Wiener process, defined on $(\Omega,\mathscr{F},\mathbb{P})$, in the Hilbert space $H$ with respect to the filtration $\mathbb{F}$.
\medskip

We will state now the result regarding system \eqref{eq:SCP} corresponding to \eqref{eq:stochsys}.

\begin{theorem}\label{theo:SCPnsolcont}
Let $F$, $G$ and $W$ defined in \eqref{eq:Fdef}, \eqref{eq:Gdef} and \eqref{eq:Wdef}, respectively. Let $q > 4$ be arbitrary. Then for every $\xi\in L^q(\Omega,\mathscr{F}_0,\mathbb{P};B)$ a unique mild solution $X$ of equation \eqref{eq:SCP} exists globally and belongs to $L^q(\Omega; C([0,T];B))$.
\end{theorem}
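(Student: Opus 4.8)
The plan is to verify that the data $(A_p, D(A_p))$, $B$, $F$, $G$, $W_H$ satisfy all the hypotheses of Theorem \ref{theo:KvN4.9}, and then simply invoke that theorem with a suitable choice of $p$ and $\theta$. First I would fix the abstract framework: take $E = E_p$ for some large $p \geq 2$ (so $E$ is UMD of type $2$), $A = A_p$, and recall from Proposition \ref{prop:sgrextend} and Lemma \ref{lem:opAp} that $A_p$ generates a positive, analytic contraction semigroup on $E_p$; this gives Assumption \ref{assum:mainvN}(1). Since the semigroup is contractive we are in the $\omega = 0$ case of Remark \ref{rem:omega0}, so the fractional domain spaces are $E_p^\theta = D((-A_p)^\theta)$. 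For Assumption \ref{assum:mainvN}(2) I would invoke Corollary \ref{cor:fractionalspaceincl}: for any $\theta > \frac{1}{2p}$ we have continuous dense embeddings $E_p^\theta \hookrightarrow B \hookrightarrow E_p$; the point is that by choosing $p$ large enough we can make $\frac{1}{2p}$ as small as we like, in particular $< \frac12$. Assumption \ref{assum:mainvN}(3) is exactly the content of Proposition \ref{prop:mcAonC}: the part of $A_p$ in $B = C(G)$ generates a strongly continuous contraction semigroup, hence $A|_B$ is dissipative.

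Next I would check the nonlinearity conditions (4) and (5) for $F$ as defined in \eqref{eq:Fdef}. Because $B = C(G)$ is a Banach algebra and Assumption \ref{asum:F} guarantees the coefficient vectors lie in $B$, the map $F$ indeed sends $[0,T] \times \Omega \times B$ into $B$, and the polynomial structure \eqref{eq:fjdef} together with the bounds \eqref{eq:assa_j} gives local Lipschitz continuity on balls, the bound on $F(t,\omega,0)$, and strong measurability/adaptedness. The crucial dissipativity-type inequalities — the one-sided estimate $\langle Au + F(t, u+v), u^*\rangle \leq a(1+\|v\|)^N + b\|u\|$ in (4) and the estimate $\langle F(t, u+v) - F(t,v), u^*\rangle \leq a''(1+\|v\|)^{m'} - b''\|u\|^{m'}$ in (5) — follow from the negative leading coefficient of the odd-degree term: evaluating the subdifferential $\partial\|u\|_{C(G)}$ at a point where $|u_j(s)|$ attains its maximum, the dominant contribution is $-a_{j,2k+1}(t,\omega,s)\, u_j(s)^{2k+1} \cdot \mathrm{sgn}(u_j(s)) = -a_{j,2k+1} |u_j(s)|^{2k+1} \leq -c\|u\|^{2k+1}$, while the lower-order terms and the cross terms with $v$ are controlled by \eqref{eq:assa_j}; here one takes $m' = 2k+1$ and $N = 2k+1$, and uses $\langle A u, u^*\rangle \le 0$ from dissipativity of $A|_B$. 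This is essentially the computation in \cite[Example 4.2]{KvN12} carried out coordinatewise on the edges, and I expect it to be the most technical — though not conceptually hard — step.

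For Assumption \ref{assum:mainvN}(6) I would rely on Proposition \ref{prop:Gprop}: with $\kappa_G \in (\frac14, \frac12)$ fixed, the operator $G$ of \eqref{eq:Gdef} maps $[0,T]\times B$ into $\gamma(H, E_p^{-\kappa_G})$. Local Lipschitz continuity of $G$ on balls and linear growth follow from the assumptions \eqref{eq:gidef} on the $g_j$ (local Lipschitz, linear growth, uniform in the other variables), transported through the bounded operators $(\nu - A_2)^{-\kappa_G}$, $\imath$, $\jmath$ appearing in \eqref{eq:Gdef} — the Lipschitz constant of $u \mapsto \Gamma(t,u)h$ in $\mathcal L(H)$ is inherited from that of the $g_j$ on the relevant ball, and then composed with the fixed bounded embeddings; measurability and adaptedness follow from the corresponding properties of the $g_j$. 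Finally, with all of Assumptions \ref{assum:mainvN} verified, I would choose $q > 4$ as in the statement, then pick $p$ large and $\theta > \frac{1}{2p}$ small enough, together with $\kappa_G \in (\frac14,\frac12)$ close enough to $\frac14$, so that $\theta + \kappa_G < \frac12 - \frac1q$; this is possible precisely because $q > 4$ forces $\frac12 - \frac1q > \frac14$, leaving room above $\kappa_G$. Theorem \ref{theo:KvN4.9} then yields a unique global mild solution $X \in L^q(\Omega; C([0,T]; B))$ of \eqref{eq:SCP}, with the stated moment bound. The only genuine obstacle is bookkeeping the admissible range of parameters $(p, \theta, \kappa_G, q)$ so that the constraint $\theta + \kappa_G < \frac12 - \frac1q$ is met while simultaneously keeping $E_p$ UMD of type $2$ and $E_p^\theta \hookrightarrow B$; everything else is assembling results already proved in Sections \ref{sec:determnetwork} and \ref{subsec:prep}.
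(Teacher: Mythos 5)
Your proposal is correct and follows essentially the same route as the paper: verify Assumptions \ref{assum:mainvN} one by one (using Proposition \ref{prop:sgrextend}, Corollary \ref{cor:fractionalspaceincl}, Proposition \ref{prop:mcAonC}, the computation of \cite[Examples 4.2, 4.5]{KvN12} for $F$ with $N=m'=2k+1$, and Proposition \ref{prop:Gprop} for $G$), then choose $p$ large, $\theta>\frac{1}{2p}$ small and $\kappa_G\in(\frac14,\frac12)$ near $\frac14$ so that $\theta+\kappa_G<\frac12-\frac1q$, which is exactly where $q>4$ enters, and invoke Theorem \ref{theo:KvN4.9}. Your parameter bookkeeping and the sketch of the one-sided estimates for $F$ match the paper's argument.
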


\begin{proof}
The condition $q > 4$ allows us to choose $2 \leq p < \infty$, $\theta\in [0,\frac{1}{2})$ and $\kappa_G\in (\frac14 ,\frac12)$ such that
\begin{equation}\label{eq:thetap}
\theta>\frac{1}{2p}
\end{equation} 
and 
\begin{equation}\label{eq:thetakappaG}
0 \leq \theta+ \kappa_G < \frac12 - \frac1q.
\end{equation} 

We will apply Theorem \ref{theo:KvN4.9} with $\theta$ and $\kappa_G$ having the properties above. To this end we have to check Assumptions \ref{assum:mainvN} for the mappings in \eqref{eq:SCP}, taking $A=A_p$ and $E=E_p$ for the $p$ chosen above. Assumption $(1)$ is satisfied because of the generator property of $A_p$, see Proposition \ref{prop:sgrextend}.
Assumption $(2)$ is satisfied since \eqref{eq:thetap} holds and we can use Corollary \ref{cor:fractionalspaceincl}.
Assumption $(3)$ is satisfied by the statement of Proposition \ref{prop:mcAonC}.
Using that the functions $f_j$ are polynomials of the 4th variable of the same degree $2k+1$ (see \eqref{eq:fjdef}), a similar computation as in \cite[Example 4.2]{KvN12} and \cite[Example 4.5]{KvN12}, using techniques from \cite[Section 4.3]{DPZ92}, shows that Assumptions $(4)$ and $(5)$ are satisfied for $F$ with $N=m'=2k+1$. By Proposition \ref{prop:Gprop}, $G$ takes values in $\gamma (H,
E_p^{-\kappa_{G}})$ with $H=E_2$ and $\kappa_G$ chosen above. Using the assumptions \eqref{eq:gidef} on the functions $g_j$ and the proof of \cite[Theorem 10.2]{vNVW08}, we obtain that $G$ is locally Lipschitz continuous and of linear growth as a map $[0,T]\times B\to \gamma (H,E_p^{-\kappa_{G}})$, hence Assumption $(6)$ holds.
\end{proof}

In the following theorem we will state a result regarding H\"older regularity of the mild solution of \eqref{eq:SCP} corresponding to \eqref{eq:stochsys}, see \eqref{eq:mildsol}.

\begin{theorem}\label{theo:Holderreg}
Let $q > 4$ be arbitrary, $\lambda, \eta>0$ and $p\geq 2$ such that $\lambda+\eta>\frac{1}{2p}$. We assume that $\xi\in L^{(2k+1)q}(\Omega;E_p^{\lambda+\eta})$, where $k$ is the constant appearing in \eqref{eq:fjdef}. If the inequality
\begin{equation}\label{eq:thmHolderreglaeta}
\lambda+\eta<\frac{1}{4}-\frac{1}{q}
\end{equation} 
is fulfilled, then the mild solution $X$ of \eqref{eq:SCP} from Theorem \ref{theo:SCPnsolcont} satisfies 
\[X\in L^q(\Omega;C^{\lambda}([0,T],E_p^{\eta})).\]
\end{theorem}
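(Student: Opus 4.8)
The plan is to derive the Hölder regularity of the mild solution from the three constituent terms in the mild solution formula \eqref{eq:mildsol}, namely $S(t)\xi$, the deterministic convolution $S\ast F(\cdot,X(\cdot))(t)$, and the stochastic convolution $S\diamond G(\cdot,X(\cdot))(t)$, and to invoke standard space-time regularity estimates for analytic semigroups and $\gamma$-radonifying operators. First I would recall from Theorem \ref{theo:SCPnsolcont} (applied with exponent $(2k+1)q>4$ in place of $q$, which is legitimate since $\xi\in L^{(2k+1)q}(\Omega;E_p^{\lambda+\eta})\hookrightarrow L^{(2k+1)q}(\Omega;B)$ because $\lambda+\eta>\tfrac1{2p}$ and Corollary \ref{cor:fractionalspaceincl} gives $E_p^{\lambda+\eta}\hookrightarrow B$) that $X\in L^{(2k+1)q}(\Omega;C([0,T];B))$ exists and is unique; this higher integrability is exactly what is needed to control the nonlinearity $F$, since $\|F(\cdot,X(\cdot))\|_B\lesssim (1+\|X\|_B)^{2k+1}$ by the polynomial growth \eqref{eq:fjdef}–\eqref{eq:assa_j}, so $F(\cdot,X(\cdot))\in L^q(\Omega;L^\infty(0,T;B))$. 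For the linear term, the analyticity of $S=(T_p(t))_{t\ge0}$ and $\xi\in E_p^{\lambda+\eta}$ give $t\mapsto S(t)\xi\in C^\lambda([0,T];E_p^\eta)$ by the elementary estimate $\|S(t)\xi-S(s)\xi\|_{E_p^\eta}\le \|(S(t-s)-I)(\omega'-A_p)^{-\eta}(\omega'-A_p)^{\lambda+\eta}\xi\|\cdot\text{(const)}\lesssim (t-s)^\lambda\|\xi\|_{E_p^{\lambda+\eta}}$, using $\|(S(h)-I)(\omega'-A_p)^{-\lambda}\|\lesssim h^\lambda$.

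Next I would treat the deterministic convolution: since $F(\cdot,X(\cdot))$ takes values in $B\hookrightarrow E_p$ and is (pathwise) bounded in $B$, the standard maximal-regularity-type estimate for analytic semigroups (e.g. as in \cite{KvN12,KvN19}) yields that $t\mapsto (S\ast F(\cdot,X(\cdot)))(t)$ lies in $C^{\lambda}([0,T];E_p^\eta)$ whenever $\lambda+\eta<1$, with the relevant norm controlled by $\sup_{[0,T]}\|F(\cdot,X(\cdot))\|_{E_p}\lesssim(1+\|X\|_{C([0,T];B)})^{2k+1}$; raising to the $q$-th power and taking expectations, the bound $\mathbb{E}\|X\|^{(2k+1)q}_{C([0,T];B)}<\infty$ from Theorem \ref{theo:SCPnsolcont} closes this term in $L^q(\Omega;\cdot)$. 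The stochastic convolution is the delicate term: here $G(\cdot,X(\cdot))$ takes values in $\gamma(H,E_p^{-\kappa_G})$ with linear growth in $\|X\|_B$ (Proposition \ref{prop:Gprop} and the proof of Theorem \ref{theo:SCPnsolcont}), so $G(\cdot,X(\cdot))\in L^q(\Omega;L^\infty(0,T;\gamma(H,E_p^{-\kappa_G})))$. I would then apply the factorization/Kolmogorov argument together with the $\gamma$-boundedness estimates for stochastic convolutions with analytic semigroups from \cite{vNVW08,KvN12}: for $2<q<\infty$ one obtains $S\diamond G(\cdot,X(\cdot))\in L^q(\Omega;C^\lambda([0,T];E_p^\eta))$ provided $\lambda+\eta+\kappa_G<\tfrac12-\tfrac1q$. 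The inequality \eqref{eq:thmHolderreglaeta}, $\lambda+\eta<\tfrac14-\tfrac1q$, together with the freedom to pick $\kappa_G\in(\tfrac14,\tfrac12)$ close enough to $\tfrac14$, is exactly what allows $\lambda+\eta+\kappa_G<\tfrac12-\tfrac1q$ to hold; I would state this choice of $\kappa_G$ explicitly at the outset.

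Finally I would assemble the three pieces: $S(\cdot)\xi$, $S\ast F(\cdot,X(\cdot))$, and $S\diamond G(\cdot,X(\cdot))$ all lie in $L^q(\Omega;C^\lambda([0,T];E_p^\eta))$, hence so does $X$, which is their sum by \eqref{eq:mildsol}. One subtle point worth spelling out: the domain spaces $E_p^\eta$ and $E_p^{\lambda+\eta}$ are defined via fractional powers of $A_p$ (Proposition \ref{prop:sgrextend}, Lemma \ref{lem:opAp}), and since $(T_p(t))$ is a contraction semigroup we may take $\omega'$ as small as we like (Remark \ref{rem:omega0} style); all the interpolation and fractional-power estimates used above are standard for such operators, and the embedding constants in Corollary \ref{cor:fractionalspaceincl} are uniform in the relevant parameters. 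I expect the main obstacle to be the bookkeeping of exponents — verifying that the single constraint \eqref{eq:thmHolderreglaeta} genuinely suffices once $\kappa_G$, $p$ and $q$ are chosen, and making sure the Hölder-in-time/regularity-in-space trade-off for both convolutions is invoked with the correct pair of exponents (in particular that the deterministic convolution needs only $\lambda+\eta<1$ while the stochastic one carries the binding restriction); the probabilistic estimates themselves are essentially quotations from \cite{vNVW08,KvN12,KvN19}.
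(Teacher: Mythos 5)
Your proposal is correct and follows essentially the same route as the paper: the same decomposition of the mild solution into $S(\cdot)\xi$, $S\ast F$ and $S\diamond G$, the same bootstrap via Theorem \ref{theo:SCPnsolcont} with exponent $(2k+1)q$ using $E_p^{\lambda+\eta}\hookrightarrow B$, the same fractional-power/Favard estimate for the initial term, and the same convolution estimates from \cite{vNVW08} with $\kappa_G$ chosen in $(\tfrac14,\tfrac12)$ close to $\tfrac14$ so that $\lambda+\eta+\kappa_G<\tfrac12-\tfrac1q$. The only cosmetic difference is that the paper measures $F$ in $E_p^{-\theta}$ with $\theta>\tfrac1{2p}$ and imposes $\lambda+\eta+\theta<1-\tfrac1q$ for the deterministic convolution, which is just a slightly more careful version of your non-binding condition on that term.
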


\begin{proof}
Using the continuous embedding \eqref{eq:Ethetaincl}, we have that
\[\xi\in L^{(2k+1)q}(\Omega;B)\]
holds. Since $(2k+1)q>4$, by Theorem \ref{theo:SCPnsolcont} there exists a global mild solution
\[X\in L^{(2k+1)q}(\Omega; C([0,T],B)).\]
This solution satisfies the following implicit equation (see \eqref{eq:mildsol}):
\begin{equation}\label{eq:proofmildsol}
X(t)=S(t)\xi+S\ast F(\cdot,X(\cdot))(t)+S\diamond G(\cdot,X(\cdot))(t),
\end{equation}
where $S$ denotes the semigroup generated by $A_p$ on $E_p$, $\ast$ denotes the usual convolution, $\diamond$ denotes the stochastic convolution with respect to $\mcW.$ In the following we have to estimate the $L^q(\Omega;C^{\lambda}([0,T],E_p^{\eta}))$-norm of $X$, and we will do this using the triangle-inequality in \eqref{eq:proofmildsol}. 

For the $q$th power of the first term we have
\begin{align}\label{eq:Sxi}
\mathbb{E}\|S(\cdot)\xi\|_{C^{\lambda}([0,T],E_p^{\eta})}^q &=\mathbb{E}\left(\sup_{t,s\in[0,T]}\frac{\|S(t)\xi-S(s)\xi\|_{E_p^{\eta}}}{|t-s|^{\lambda}}\right)^q\notag\\
&\leq\mathbb{E}\left(\sup_{h\in[0,T]}\frac{\|S(h)\xi-\xi\|_{E_p^{\eta}}}{|h|^{\lambda}}\right)^q\notag\\
&=\mathbb{E}\left(\sup_{h\in[0,T]}\frac{\|S(h)(-A_p)^{\eta}\xi-(-A_p)^{\eta}\xi\|_{E_p}}{|h|^{\lambda}}\right)^q.
\end{align}
By assumption, $(-A_p)^{\eta}\xi\in D((-A_p)^{\lambda})$ holds. Applying \cite[Proposition II.5.33]{EN00} we obtain that $(-A_p)^{\eta}\xi$ lies in the H\"older space of order $\lambda$ on $E_p$, denoted by $C^{\lambda}_{p}$. Hence,
\[\sup_{h\in[0,T]}\frac{\|S(h)(-A_p)^{\eta}\xi-(-A_p)^{\eta}\xi\|_{E_p}}{|h|^{\lambda}}=\|(-A_p)^{\eta}\xi\|_{\mathsf{F}_{p,\lambda}}<\infty,\]
where $\|\cdot\|_{\mathsf{F}_{p.\lambda}}$ denotes the Favard norm of order $\lambda$ on $E_p$, see \cite[Definition II.5.10]{EN00}. Furthermore, because of the continuous inclusion $D((-A_p)^{\lambda})\hookrightarrow C^{\lambda}_{p}$, we have that there exists $c=c(\lambda)$ such that
\[\|(-A_p)^{\eta}\xi\|_{\mathsf{F}_{p,\lambda}}\leq c\cdot\|(-A_p)^{\eta}\xi\|_{E_p^{\lambda}}=c\cdot \|(-A_p)^{\lambda+\eta}\xi\|_{E_p}.\]
Hence,
\[\mathbb{E}\|S(\cdot)\xi\|_{C^{\lambda}([0,T],E_p^{\eta})}^q \leq c\cdot \mathbb{E}\|(-A_p)^{\lambda+\eta}\xi\|^q_{E_p}<\infty\]
by assumption.

To estimate the $q$th power of the second term
\[\mathbb{E}\|S\ast F(\cdot,X(\cdot))\|_{C^{\lambda}([0,T],E_p^{\eta})}^q\]
we choose $\theta>\frac{1}{2p}$ such that
\[\lambda+\eta+\theta<1-\frac{1}{q}.\]
We will use \cite[Lemma 3.6]{vNVW08} with this $\theta$, $\alpha=1$, and $q$ instead of $p$, and obtain that there exist constants $C\geq 0$ and $\ve>0$ such that
\begin{equation}\label{eq:est2nd1}
\|S\ast F(\cdot,X(\cdot))\|_{C^{\lambda}([0,T],E_p^{\eta})}\leq CT^{\ve}\|F(\cdot,X(\cdot))\|_{L^q(0,T;E_p^{-\theta})}.
\end{equation}
We have to estimate the expectation of the $q$th power on the right-hand-side of \eqref{eq:est2nd1}. By Corollary \ref{cor:fractionalspaceincl} we obtain
\[B\hookrightarrow E_p\hookrightarrow E_p^{-\theta},\]
since $\theta>\frac{1}{2p}$ holds and $(\omega'-A_p)^{-\theta}$ is an isomorphism between $E_p^{-\theta}$ and $E_p$. Using this and Assumptions \ref{assum:mainvN}(5) with $m'=2k+1$ (which holds by the proof of Theorem \ref{theo:SCPnsolcont}), we have
\begin{align}
\mathbb{E}\|F(\cdot,X(\cdot))\|^q_{L^q(0,T;E_p^{-\theta})}&=\mathbb{E}\int_0^T\|F(s,X(s))\|^q_{E_p^{-\theta}}\ds\\
& \lesssim \mathbb{E}\int_0^T\|F(s,X(s))\|^q_{B}\ds\\
& \lesssim \mathbb{E}\int_0^T(1+\|X(s)\|^{(2k+1)q}_{B})\ds\\
& \lesssim 1+\mathbb{E}\sup_{t\in[0,T]}\|X(t)\|^{(2k+1)q}_{B},
\end{align}
where $\lesssim$ denotes that the expression on the left-hand-side is less or equal to a constant times the expression on the right-hand-side.
This implies that for each $T>0$ there exists $C_T>0$ such that
\begin{equation}\label{eq:SastF}
\left(\mathbb{E}\|S\ast F(\cdot,X(\cdot))\|_{C^{\lambda}([0,T],E_p^{\eta})}^q\right)^{\frac{1}{q}}
\leq C_T\cdot \left(1+\|X(t)\|_{L^{(2k+1)q}(\Omega; C([0,T],B))}^{2k+1}\right),
\end{equation}
and the right-hand-side is finite.

To estimate the stochastic convolution term in \eqref{eq:proofmildsol} we first fix $0<\alpha<\frac{1}{2}$ such that 
\[\lambda+\eta+\frac{1}{4}<\alpha-\frac{1}{q}\] 
holds. We now choose $\kappa_G\in(\frac{1}{4},\frac{1}{2})$ such that
\[\lambda+\eta+\kappa_G<\alpha-\frac{1}{q}\]
is satisfied. Applying \cite[Proposition 4.2]{vNVW08} with $\theta=\kappa_G$ and $q$ instead of $p$, we have that there exist $\ve>0$ and $C\geq 0$ such that
\begin{equation}
\mathbb{E}\left\|S\diamond G(\cdot,X(\cdot))\right\|^q_{C^{\lambda}([0,T],E_p^{\eta})} \leq C^qT^{\ve q}\int_0^T\mathbb{E}\left\|s\mapsto (t-s)^{-\alpha}G(s,X(s))\right\|^q_{\gamma(L^2(0,t;H),E_p^{-\kappa_G})}\dt.
\end{equation}
In the following we proceed similarly as done in the proof of \cite[Theorem 4.3]{KvN12}, with $N=1$ and $q$ instead of $p.$ Since $E_p^{-\kappa_G}$ is a Banach space of type $2$ (because $E_p$ is of that type), the continuous embedding
\[ L^2(0,t;\gamma(H,E_p^{-\kappa_G}))\hookrightarrow \gamma(L^2(0,t;H),E_p^{-\kappa_G})\]
holds.
Using this, Young's inequality and the properties of $G$, respectively, we obtain the following estimates
\begin{align}
\mathbb{E}\left\|S\diamond G(\cdot,X(\cdot))\right\|^q_{C^{\lambda}([0,T],E_p^{\eta})} &\lesssim T^{\ve q}\int_0^T\mathbb{E}\left\|s\mapsto (t-s)^{-\alpha}G(s,X(s))\right\|^q_{L^2(0,t;\gamma(H,E_p^{-\kappa_G}))}\dt\\
&=T^{\ve q}\mathbb{E}\int_0^T\left(\int_0^t (t-s)^{-2\alpha}\left\|G(s,X(s))\right\|^2_{\gamma(H,E_p^{-\kappa_G})}\ds\right)^{\frac{q}{2}}\dt\\
&\leq T^{\ve q}\left(\int_0^T t^{-2\alpha}\dt\right)^{\frac{q}{2}}\mathbb{E}\int_0^T \left\|G(t,X(t))\right\|^q_{\gamma(H,E_p^{-\kappa_G})}\dt \\
&\leq T^{(\frac{1}{2}-\alpha+\ve)q}(c')^q\cdot \mathbb{E}\int_0^T\left(1+\|X(t)\|_{B}\right)^q\dt\\
&\lesssim T^{(\frac{1}{2}-\alpha+\ve)q+1}(c')^q\cdot\left(1+\mathbb{E}\|X(t)\|^q_{C([0,T],B)}\right).
\end{align}
Hence, for each $T>0$ there exists constant $C'_{T}>0$ such that
\begin{equation}\label{eq:SdiamondG}
\left(\mathbb{E}\left\|S\diamond G(\cdot,X(\cdot))\right\|^q_{C^{\lambda}([0,T],E_p^{\eta})}\right)^{\frac{1}{q}}
\leq C'_{T}\cdot\left(1+\|X(t)\|_{L^{(2k+1)q}(\Omega; C([0,T],B))}\right)^{2k+1}
\end{equation}
In summary, by \eqref{eq:Sxi}, \eqref{eq:SastF} and \eqref{eq:SdiamondG}, we obtain that $X\in L^q(\Omega;C^{\lambda}([0,T],E_p^{\eta}))$ holds, hence the proof is completed.
\end{proof}

We are now in the position to finally consider \eqref{eq:stochnet}. Let
\begin{equation}\label{eq:betamax}
\beta\coloneqq \max_{1\leq j\leq m}\beta_j.
\end{equation}
We also introduce
\begin{equation}\label{eq:fdef}
f_j(\eta)\coloneqq f(\eta)=-\eta^3+\beta^2 \eta.
\end{equation}
and
\[\varrho_j\coloneqq \beta^2-\beta_j^2\geq 0,\]
With these notations, we can rewrite \eqref{eq:stochnet} in an equivalent form as
\begin{equation}\label{eq:stochneteq}
\left\{\begin{aligned}
\dot{u}_j(t,x)&= (c_j u_j')'(t,x)-\tilde{p}_j(x)u_j(t,x)&&\\
&\quad +f_j(u_j(t,x))&&\\
&\quad +g_j(t,x,u_j(t,x))\frac{\partial w_j}{\partial t}(t,x), &&t\in(0,T],\; x\in(0,1),\; j=1,\dots,m, \\
u_j(t,\mv _i)&=u_\ell (t,\mv _i)\eqqcolon q_i(t), &&t\in(0,T],\; \forall j,\ell\in \Gamma(\mv _i),\; i=1,\ldots,n,\\
\left[M q(t)\right]_{i} &= -\sum\nolimits_{j=1}^m \phi_{ij}\mu_{j} c_j(\mv_i) u'_j(t,\mv_i), && t\in(0,T],\; i=1,\ldots,n,\\
u_j(0,x)&=\mathsf{u}_{j}(x), &&x\in [0,1],\; j=1,\dots,m,  
\end{aligned}
\right.
\end{equation}
with $\tilde{p}_j(x):=p_j(x)+\varrho_j$, $j=1,\dots m.$ 

We define the operator $A_p$ on $E_p$ as in \eqref{eq:opAmax} with $\tilde{p}_j$'s instead of $p_j$'s and with domain \eqref{eq:domAp}.

\begin{theorem}\label{thm:SAC}
Let $F$, $G$ and $W$ defined in \eqref{eq:Fdef}, \eqref{eq:Gdef} and \eqref{eq:Wdef}, respectively, for the system \eqref{eq:stochneteq}. Let $q > 4$ be arbitrary. Then for every $\xi\in L^q(\Omega,\mathscr{F}_0,\mathbb{P};B)$ a unique mild solution $X$ of equation \eqref{eq:SCP} corresponding to \eqref{eq:stochneteq}, which is equivalent  to \eqref{eq:stochnet}, exists globally and belongs to $L^q(\Omega; C([0,T];B))$. Let $\lambda, \eta>0$, $p\geq 2$ be arbitrary constants such that $\lambda+\eta>\frac{1}{2p}$. If $\xi\in L^{3q}(\Omega;E_p^{\lambda+\eta})$ and the inequality
\[\lambda+\eta<\frac{1}{4}-\frac{1}{q}\] 
is fulfilled, then 
$X\in L^q(\Omega;C^{\lambda}([0,T],E_p^{\eta})).$
\end{theorem}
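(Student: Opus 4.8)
The plan is to reduce everything to the abstract results already established, namely Theorem \ref{theo:SCPnsolcont} and Theorem \ref{theo:Holderreg}, applied to the equivalent system \eqref{eq:stochneteq}. First I would verify the equivalence of \eqref{eq:stochnet} and \eqref{eq:stochneteq}: on the $j$th edge the Allen--Cahn reaction term is $\beta_j^2\eta-\eta^3 = (\beta^2\eta-\eta^3)-(\beta^2-\beta_j^2)\eta = f(\eta)-\varrho_j\eta$, so absorbing the linear term $-\varrho_j u_j$ into the diffusion part turns the potential $p_j$ into $\tilde p_j = p_j+\varrho_j$ and leaves the common cubic $f(\eta)=-\eta^3+\beta^2\eta$ as the nonlinearity on every edge. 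Since $p_j\ge 0$ and $\varrho_j\ge 0$ by \eqref{eq:betamax}, we still have $0\le\tilde p_j\in C[0,1]$, so \eqref{eq:pjpos} holds for the $\tilde p_j$'s and the whole of Section \ref{sec:determnetwork} applies verbatim to the operator $A_p$ built from the $\tilde p_j$'s: it generates a positive analytic contraction semigroup on $E_p$ for $p\in(1,\infty)$ (Proposition \ref{prop:sgrextend}, Lemma \ref{lem:opAp}), its part in $B$ generates a strongly continuous contraction semigroup (Proposition \ref{prop:mcAonC}), and the continuous dense embeddings $E_p^\theta\hookrightarrow B\hookrightarrow E_p$ of Corollary \ref{cor:fractionalspaceincl} hold for $\theta>\frac{1}{2p}$.

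Next I would check that the data of \eqref{eq:stochneteq} fit the hypotheses of Subsection \ref{subsec:main}. The reaction terms are $f_j(t,\omega,x,\eta)=f(\eta)=-\eta^3+\beta^2\eta$, which is precisely a polynomial of the form \eqref{eq:fjdef} with $k=1$ (so the common degree is $2k+1=3$), having coefficients $a_{j,3}\equiv 1$, $a_{j,1}\equiv\beta^2$, $a_{j,0}=a_{j,2}\equiv 0$; these are constant, hence \eqref{eq:assa_j} holds with $c=1$ and $C=\max\{1,\beta^2\}$ and, being $x$-independent constants equal on all edges, they satisfy Assumption \ref{asum:F} by Remark \ref{rem:exfj}. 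The diffusion coefficients $g_j$ are assumed to satisfy \eqref{eq:gidef} together with the stated joint measurability and adaptedness. Thus $F$, $G$ and $W_H$ defined via \eqref{eq:Fdef}, \eqref{eq:Gdef} and \eqref{eq:Wdef} satisfy all the hypotheses under which Theorems \ref{theo:SCPnsolcont} and \ref{theo:Holderreg} were proved.

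With these verifications in place the existence and uniqueness claim is immediate: for $q>4$ and $\xi\in L^q(\Omega,\mathscr{F}_0,\mathbb{P};B)$, Theorem \ref{theo:SCPnsolcont} gives a unique global mild solution $X\in L^q(\Omega;C([0,T];B))$ of \eqref{eq:SCP} corresponding to \eqref{eq:stochneteq}, which, by the equivalence established in the first step, is the mild solution of \eqref{eq:stochnet}. For the regularity statement, since $k=1$ we have $(2k+1)q=3q$, so the hypothesis $\xi\in L^{3q}(\Omega;E_p^{\lambda+\eta})$ is exactly the one required in Theorem \ref{theo:Holderreg}; together with $\lambda+\eta>\frac{1}{2p}$ and $\lambda+\eta<\frac14-\frac1q$, that theorem yields $X\in L^q(\Omega;C^{\lambda}([0,T],E_p^{\eta}))$.

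I do not anticipate a genuine obstacle here — the content of the theorem is essentially bookkeeping. The only points needing care are (i) the algebraic rewriting showing that the shift by $\varrho_j$ keeps the modified potentials nonnegative, so that the deterministic theory of Section \ref{sec:determnetwork} is undisturbed, and (ii) matching the cubic Allen--Cahn nonlinearity to the polynomial template \eqref{eq:fjdef} with the correct value $k=1$, which is what produces the exponent $3q$ rather than a generic $(2k+1)q$ in the regularity hypothesis. Everything else is a direct citation of Theorems \ref{theo:SCPnsolcont} and \ref{theo:Holderreg}.
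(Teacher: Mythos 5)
Your proposal is correct and follows essentially the same route as the paper's own proof: check that the shifted potentials $\tilde p_j=p_j+\varrho_j$ remain nonnegative, identify the cubic Allen--Cahn nonlinearity as the template \eqref{eq:fjdef} with $k=1$ and constant coefficients so that Assumption \ref{asum:F} holds via Remark \ref{rem:exfj}, and then invoke Theorems \ref{theo:SCPnsolcont} and \ref{theo:Holderreg} (with $(2k+1)q=3q$). Your write-up simply spells out these verifications in more detail than the paper does.
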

\begin{proof}
First note that the coefficients $\tilde{p}_j$ stay nonnegative as the constants $\varrho_j$ are nonnegative. Furthermore, the nonlinear terms $f_j=f$ in \eqref{eq:fdef} are of the form \eqref{eq:fjdef} with $k=1$ and constant coefficients. Hence,  Assumption \ref{asum:F} is fullfilled by Remark \ref{rem:exfj}. The statement then follows from Theorems \ref{theo:SCPnsolcont} and \ref{theo:Holderreg}.
\end{proof}

\subsection{Concluding remarks}

In equation (\ref{eq:stochsys}a) we could have prescribed coloured noise instead of white noise on the edges of the graph. That is, we could set
\begin{equation}\label{eq:stochsysmod}
\begin{aligned}
\dot{u}_j(t,x)&= (c_j u_j')'(t,x)-p_j(x)u_j(t,x)\\
&\quad + f_j(t,x,u_j(t,x))\\
&\quad +g_j(t,x,u_j(t,x))R_j\dfrac{\partial w_j}{\partial t}(t,x), \qquad t\in(0,T],\; x\in(0,1),\; j=1,\dots,m,
\end{aligned}
\end{equation}
with $R_j\in \gamma (L^2(0,1;\mu_j dx),L^p(0,1;\mu_j dx))$. Then we define
\[R\coloneqq \begin{pmatrix}
	 R_1 & \hdots & 0\\
	\vdots & \ddots & \vdots\\
	 0 & \hdots  &R_m
	\end{pmatrix}\in \gamma (H,E_p)\]
with $H=E_2$ and $p\geq 2$ arbitrary. Using this, we can define the operator $G:[0,T]\times B\to \gamma(H,E_p)$ as
\[G(t,u)h\coloneqq \Gamma(t,u)Rh,\quad h\in H,\]
where the operator $\Gamma:[0,T]\times B\to \mathcal{L}(H)$ is defined in \eqref{eq:Gammadef}. It is easy to see that $G$ satisfies Assumptions \ref{assum:mainvN}(6) with $\kappa_G=0$. For example, if $u,v\in B$ with $\|u\|,\|v\|\leq r$, then
\begin{align}
\|G(t,u)-G(t,v)\|_{\gamma (H,E_p)}&\leq \|\Gamma(t,u)-\Gamma(t,v)\|_{\mathcal{L}(E_p)}\cdot\|R\|_{\gamma (H,E_p)}\\
&\leq L^{(r)}\cdot\|u-v\|_{B}\cdot\|R\|_{\gamma (H,E_p)}
\end{align}
where $L^{(r)}$ is the maximum of the Lipschitz-constants of the functions $g_j$ on the ball of radius $r$.

If setting \eqref{eq:stochsysmod} instead of  (\ref{eq:stochsys}a), Theorem \ref{theo:SCPnsolcont} remains true as stated; that is, for $q>4$, but one may use a simpler Hilbert space machinery; that is, one may set $p=2$ in the proof. However, in the coloured noise case, Theorem \ref{theo:SCPnsolcont} is true also for $q>2$. But this can only be shown by choosing $p>2$ large enough in the proof and hence, in this case, the Banach space arguments are crucial.

In Theorem \ref{theo:Holderreg}, if one takes $p=2$ (Hilbert space) and $q>4$, then the statement is true for  $\la+\eta>\frac14$ with
\begin{equation}\label{eq:thmHolderreglaetamod}
\la+\eta<\frac12-\frac1q
\end{equation}
instead of \eqref{eq:thmHolderreglaeta}. 
In this case $R$ will be a Hilbert-Schmidt operator whence the covariance operator of the driving process is trace-class. However, the statement of the theorem remains true for $q>2$ as well assuming \eqref{eq:thmHolderreglaetamod} instead of \eqref{eq:thmHolderreglaeta}, but only for the Banach space $E_p$ for $p$ large enough so that $\la+\eta>\frac{1}{2p}$.

The statements of Theorem \ref{thm:SAC} could also be changed accordingly.\vspace{0.5cm}

\section*{Acknowledgements} The authors would like to thank the anonymous referee for her/his useful comments that helped them to improve the presentation of the paper.

M. Kovács acknowledges the support of the Marsden Fund of the Royal Society of New
Zealand through grant no. 18-UOO-143, the Swedish Research Council (VR) through grant
no. 2017-04274 and the NKFIH through grant no. 131545.


\begin{thebibliography}{99}


\bibitem{Al84}
\textsc{F.~Ali~Mehmeti}, Probl\`emes de transmission pour des \'{e}quations des ondes lin\'{e}aires et quasilin\'{e}aires (in French), in: {\em Hyperbolic and holomorphic partial differential equations},
  Travaux en Cours, Hermann, Paris, 1984, pp. 75--96. \MR{747657}

\bibitem{Al94}
\textsc{F. Ali~Mehmeti}, {\em Nonlinear waves in networks}, Mathematical Research, Vol. 80, Akademie-Verlag, Berlin, 1994. \MR{1287844}

\bibitem{AC79}
\textsc{S.~M. Allen, J.~W. Cahn}, A microscopic theory for antiphase boundary motion and its application to antiphase domain coarsening, \textit{Acta Metallurgica}, \textbf{27}(1979), No.~6, 1085--1095.


\bibitem{Ar04}
\textsc{W. Arendt}, Semigroups and evolution equations: functional calculus, regularity and kernel estimates, in: \textit{Evolutionary equations. {V}ol. {I}}, Handb. Differ. Equ., North-Holland, Amsterdam, 2004, pp. 1--85. \MR{2103696}

\bibitem{BFN16}
\textsc{J. Banasiak, A. Falkiewicz, P. Namayanja}, Asymptotic state lumping in transport and diffusion problems on networks with applications to population problems, \textit{Math. Models Methods Appl. Sci.}, \textbf{26}(2016), No.~2, 215--247. \doi{10.1142/S0218202516400017}, \MR{3426200}

\bibitem{BFN16b}
\textsc{J. Banasiak, A. Falkiewicz, P. Namayanja}, Semigroup approach to diffusion and transport problems on networks, \textit{Semigroup Forum}, \textbf{93}(2016), No.~3, 427--443, 2016. \doi{10.1007/s00233-015-9730-4}, \MR{3572410}

\bibitem{BBS92}
\textsc{G.~Barles, L.~Bronsard, and P.~E. Souganidis}, Front propagation for reaction-diffusion equations of bistable type, \textit{Ann. Inst. H. Poincar\'{e} Anal. Non Lin\'{e}aire}, \textbf{9}(1992) No.~5, 479--496. \doi{10.1016/S0294-1449(16)30228-1}, \MR{1191007}

\bibitem{Be85}
\textsc{J. von Below}, A characteristic equation associated to an eigenvalue problem on {$c^2$}-networks, \textit{Linear Algebra Appl.}, \textbf{71}(1985), 309--325. \doi{10.1016/0024-3795(85)90258-7}, \MR{813056}

\bibitem{Be88}
\textsc{J. von Below}, Classical solvability of linear parabolic equations on networks, \textit{J. Differential Equations}, \textbf{72}(1988), No.~2, 316--337. \doi{10.1016/0022-0396(88)90158-1}, \MR{932369}

\bibitem{Be88b}
\textsc{J. von Below}, Sturm-{L}iouville eigenvalue problems on networks, \textit{Math. Methods Appl. Sci.}, \textbf{10}(1988), No.~4, 383--395. \doi{10.1002/mma.1670100404}, \MR{958480}

\bibitem{BN96}
\textsc{J. von Below, S. Nicaise}, Dynamical interface transition in ramified media with diffusion, \textit{Comm. Partial Differential Equations}, \textbf{21}(1996), No.~1-2, 255--279. \doi{10.1080/03605309608821184}, \MR{1373774}

\bibitem{BMZ08}
\textsc{S. Bonaccorsi, C. Marinelli, G. Ziglio}, Stochastic {F}itz{H}ugh-{N}agumo equations on networks with impulsive noise, \textit{Electron. J. Probab.}, \textbf{13}(2008). No.~49, 1362--1379. \doi{10.1214/EJP.v13-532}, \MR{2438810}

\bibitem{BM10}
\textsc{S. Bonaccorsi, D. Mugnolo}, Existence of strong solutions for neuronal network dynamics driven by fractional {B}rownian motions, \textit{Stoch. Dyn.}, \textbf{10}(2010) No.~3, 441--464. \doi{10.1142/S0219493710003030}, \MR{2671386}

\bibitem{BZ14}
\textsc{S. Bonaccorsi, G. Ziglio}, Existence and stability of square-mean almost periodic solutions to spatially extended neural network with impulsive noise, \textit{Random Oper. Stoch. Equ.}, \textbf{22}(2014), No.~1, 17--29. \doi{10.1515/rose-2014-0002}, \MR{3245296}

\bibitem{BG99}
\textsc{Z. Brze\'{z}niak, D. G\c{a}tarek}, Martingale solutions and invariant measures for stochastic evolution equations in {B}anach spaces, \textit{Stochastic Process. Appl.}, \textbf{84}(1999), No.~2, 187--225. \doi{10.1016/S0304-4149(99)00034-4}, \MR{1719282}

\bibitem{BP99}
\textsc{Z. Brze\'{z}niak, S. Peszat}, Space-time continuous solutions to {SPDE}'s driven by a homogeneous {W}iener process, \textit{Studia Math.}, \textbf{137}(1999), No.~3, 261--299. \doi{10.4064/sm-137-3-261-299}, \MR{1736012}

\bibitem{Ca97}
\textsc{C. Cattaneo}, The spectrum of the continuous {L}aplacian on a graph, \textit{Monatsh. Math.}, \textbf{124}(1997), No.~3, 215--235. \doi{10.1007/BF01298245}, \MR{1476363}

\bibitem{CF03}
\textsc{C. Cattaneo, L. Fontana}, D'{A}lembert formula on finite one-dimensional networks, \textit{J. Math. Anal. Appl.}, \textbf{284}(2003), No.~2, 403--424. \doi{10.1016/S0022-247X(02)00392-X}, \MR{1998641}

\bibitem{Ce03}
\textsc{S. Cerrai}, Stochastic reaction-diffusion systems with multiplicative noise and non-{L}ipschitz reaction term, \textit{Probab. Theory Related Fields}, \textbf{125}(2003), No.~2, 271--304. \doi{10.1007/s00440-002-0230-6}, \MR{1961346}

\bibitem{Co70}
\textsc{H.~E. Cook}, Brownian motion in spinodal decomposition, \textit{Acta Metallurgica}, \textbf{18}(1970), No.~3, 297--306.

\bibitem{CP17}
\textsc{F. Cordoni, L. Di~Persio}, Gaussian estimates on networks with dynamic stochastic boundary conditions, \textit{Infin. Dimens. Anal. Quantum Probab. Relat. Top.}, \textbf{20}(2017), No.~1, 1750001, 23. \doi{10.1142/S0219025717500011}, \MR{3623874}

\bibitem{CP17a}
\textsc{F. Cordoni, L. Di~Persio}, Stochastic reaction-diffusion equations on networks with dynamic time-delayed boundary conditions, \textit{J. Math. Anal. Appl.}, \textbf{451}(2017), No.~1, 583--603. \doi{10.1016/j.jmaa.2017.02.008}, \MR{3619253}

\bibitem{DPZ92}
\textsc{G. Da~Prato, J. Zabczyk}, Nonexplosion, boundedness, and ergodicity for stochastic semilinear equations, \textit{J. Differential Equations}, \textbf{98}(1992), No.~1, 181--195. \doi{10.1016/0022-0396(92)90111-Y}, \MR{1168978}

\bibitem{Da90}
\textsc{E.~B. Davies}, \textit{Heat kernels and spectral theory}, Cambridge Tracts in Mathematics, Vol.~92, Cambridge University Press, Cambridge, 1990. \MR{1103113}

\bibitem{EK19}
\textsc{K.-J. Engel, M. Kramar~Fijav\v{z}}, Waves and diffusion on metric graphs with general vertex conditions, \textit{Evol. Equ. Control Theory}, \textbf{8}(2019), No.~3, 633--661. \doi{10.3934/eect.2019030}, \MR{3985968}

\bibitem{EN00}
\textsc{K.-J. Engel, R. Nagel}, \textit{One-parameter semigroups for linear evolution equations}, Graduate Texts in Mathematics, Vol.~194, Springer-Verlag, New York, 2000. \MR{1721989}

\bibitem{Gr87}
\textsc{G. Greiner}, Perturbing the boundary conditions of a generator, \textit{Houston J. Math.}, \textbf{13}(1987), No.~2, 213--229. \MR{904952}

\bibitem{Haase06}
\textsc{M. Haase}, \textit{The functional calculus for sectorial operators}, Operator Theory: Advances and Applications, Vol.~169, Birkh\"{a}user Verlag, Basel, 2006. \doi{10.1007/3-7643-7698-8}, \MR{2244037}

\bibitem{Ka66}
\textsc{M. Kac}, Can one hear the shape of a drum?, \textit{Amer. Math. Monthly}, \textbf{73}(1966), No.~4, 1--23. \doi{10.2307/2313748}, \MR{201237}

\bibitem{KS05}
\textsc{M. Kramar, E. Sikolya}, Spectral properties and asymptotic periodicity of flows in networks, \textit{Math. Z.}, \textbf{249}(2005), No.~1, 139--162. \doi{10.1007/s00209-004-0695-3}, \MR{2106975}

\bibitem{KMS07}
\textsc{M. Kramar~Fijav\v{z}, D. Mugnolo, E. Sikolya}, Variational and semigroup methods for waves and diffusion in networks, \textit{Appl. Math. Optim.}, \textbf{55}(2007), No.~2, 219--240. \doi{10.1007/s00245-006-0887-9}, \MR{2305092}

\bibitem{KvN12}
\textsc{M. Kunze, J. van Neerven}, Continuous dependence on the coefficients and global existence for stochastic reaction diffusion equations, \textit{J. Differential Equations}, \textbf{253}(2012), No.~3, 1036--1068. \doi{10.1016/j.jde.2012.04.013}, \MR{2922662}

\bibitem{KvN19}
\textsc{M. Kunze, J. van Neerven}, Continuous dependence on the coefficients and global existence for stochastic reaction diffusion equations, \url{https://arxiv.org/abs/1104.4258}, 2019.

\bibitem{Lu80}
\textsc{G. Lumer}, Espaces ramifi\'{e}s, et diffusions sur les r\'{e}seaux topologiques (in French), \textit{C. R. Acad. Sci. Paris S\'{e}r. A-B}, \textbf{291}(1980), No.~12, A627--A630. \MR{606449}

\bibitem{MS07}
\textsc{T. M\'{a}trai, E. Sikolya}, Asymptotic behavior of flows in networks, \textit{Forum Math.}, \textbf{19}(2007), No.~3, 429--461. \doi{10.1515/FORUM.2007.018}, \MR{2328116}

\bibitem{Mu07}
\textsc{D. Mugnolo}, Gaussian estimates for a heat equation on a network, \textit{Netw. Heterog. Media}, \textbf{2}(2007), No.~1, 55--79. \doi{10.3934/nhm.2007.2.55}, \MR{2291812}

\bibitem{Mu14}
\textsc{D. Mugnolo}, \textit{Semigroup methods for evolution equations on networks}, Understanding Complex Systems, Springer, Cham, 2014. \doi{10.1007/978-3-319-04621-1}, \MR{3243602}

\bibitem{MR07}
\textsc{D. Mugnolo, S. Romanelli}, Dynamic and generalized {W}entzell node conditions for network equations, \textit{Math. Methods Appl. Sci.}, \textbf{30}(2007), No.~6, 681--706. \doi{10.1002/mma.805}, \MR{2301840}

\bibitem{Ou05}
\textsc{E.~M. Ouhabaz}, \textit{Analysis of heat equations on domains}, London Mathematical Society Monographs Series, Vol.~31, Princeton University Press, Princeton, NJ, 2005. \MR{2124040}

\bibitem{Pe95}
\textsc{S. Peszat}, Existence and uniqueness of the solution for stochastic equations on {B}anach spaces, \textit{Stochastics Stochastics Rep.}, \textbf{55}(1995), No.~3-4, 167--193. \MR{1378855}

\bibitem{RD18}
\textsc{D.~S. Raki\'{c}, D.~S. Djordjevi\'{c}}, A note on topological direct sum of subspaces, \textit{Funct. Anal. Approx. Comput.}, \textbf{10}(2018), No.~1, 9--20. \MR{3782826}

\bibitem{Triebel78}
\textsc{H. Triebel}, \textit{Interpolation theory, function spaces, differential operators}, North-Holland Mathematical Library, Vol.~18, North-Holland Publishing Co., Amsterdam-New York, 1978. \MR{503903}

\bibitem{Tu06}
\textsc{R. Tumulka}, The analogue of {B}ohm-{B}ell processes on a graph, \textit{Phys. Lett. A}, \textbf{348}(2006), No.~3-6, 126--134. \doi{10.1016/j.physleta.2005.08.042}, \MR{2190044}

\bibitem{vNVW08}
\textsc{J.~M. A.~M. van Neerven, M.~C. Veraar, L.~Weis}, Stochastic evolution equations in {UMD} {B}anach spaces, \textit{J. Funct. Anal.}, \textbf{255}(2008), No.~4, 940--993. \doi{10.1016/j.jfa.2008.03.015}, \MR{2433958}



\end{thebibliography}
\end{document}